\newtheorem{theorem}[equation]{Theorem}
\newtheorem{lemma}[equation]{Lemma}
\newtheorem{proposition}[equation]{Proposition}
\newtheorem{corollary}[equation]{Corollary}
\newtheorem{definition}[equation]{Definition}
\newtheorem{example}[equation]{Example}
\theoremstyle{remark}
\newtheorem{remark}[equation]{Remark}
\numberwithin{equation}{subsection}
\newcommand{\FF}{\mathbb{F}}
\newcommand{\ZZ}{\mathbb{Z}}
\newcommand{\QQ}{\mathbb{Q}}
\newcommand{\GG}{\mathbb{G}}
\newcommand{\CC}{\mathbb{C}}
\newcommand{\NN}{\mathbb{N}}
\newcommand{\bg}{\mathbf{g}}
\newcommand{\bx}{\mathbf{x}}
\newcommand{\bu}{\mathbf{u}}
\newcommand{\bv}{\mathbf{v}}
\newcommand{\bC}{\mathbf{C}}
\newcommand{\cF}{\mathcal{F}}
\DeclareMathAlphabet{\matheur}{U}{eur}{m}{n}
\newcommand{\fs}{\mathfrak{s}}
\newcommand{\fm}{\mathfrak{m}}
\newcommand{\rtr}{\mathrm{tr}}
 \DeclareMathOperator{\Lie}{Lie}
\DeclareMathOperator{\Ker}{Ker} 
\DeclareMathOperator{\Mat}{Mat}
 \DeclareMathOperator{\wt}{wt}
\DeclareMathOperator{\Ext}{Ext}  %\DeclareMathOperator{\exp}{exp}
\DeclareMathOperator{\Li}{Li}
\DeclareMathOperator{\ad}{ad}
\newcommand{\ok}{\overline{k}}
\newcommand{\tr}{\mathrm{tr}}
\newcommand{\laurent}[2]{{#1 (\!( #2 )\!)}}
\begin{document}

\title[$v$-adic vanishing versus $\infty$-adic Eulerianness ]{{\large{O\MakeLowercase{n multiple polylogarithms in characteristic $p$: $v$-adic vanishing versus $\infty$-adic} E\MakeLowercase{ulerianness}}}}

%    Information for first author
\author{Chieh-Yu Chang}
%    Address of record for the research reported here
\address{Department of Mathematics, National Tsing Hua University, Hsinchu City 30042, Taiwan
  R.O.C.}

\email{cychang@math.nthu.edu.tw}

%    Information for second author
\author{Yoshinori Mishiba }
%    Address of record for the research reported here
\address{Department of Life, Environment and Materials Science,
Fukuoka Institute of Technology, Japan}
\email{mishiba@fit.ac.jp }

%    \thanks will become a 1st page footnote.
\thanks{The first author was partially supported by a Golden-Jade
  fellowship of the Kenda Foundation and MOST Grant
  102-2115-M-007-013-MY5.  }
\thanks{The second author was partially supported by JSPS KAKENHI Grant Number 15K17525.}

%    General info
\subjclass[2010]{Primary 11R58, 11J93}

\date{April 5, 2017}

\begin{abstract} In this paper, we give a simultaneous vanishing principle for the $v$-adic Carlitz multiple polylogarithms (abbreviated as CMPLs) at algebraic points, where $v$ is a finite place of the rational function field over a finite field. This principle establishes the fact that the $v$-adic vanishing of CMPLs at algebraic points is equivalent to its $\infty$-adic counterpart being Eulerian. This reveals a nontrivial connection between the $v$-adic and $\infty$-adic worlds in positive characteristic.
\end{abstract}

\keywords{Carlitz multiple polylogarithms, $t$-modules, $v$-adic vanishing, $\infty$-adic Eulerian}

\maketitle
\section{Introduction}
\subsection{Motivation}

The study of this paper is motivated by the classical theory and conjectures for special zeta values. Let $n\geq 2$ be an integer. Then the celebrated formula of Euler for special values of the Riemann zeta function at even positive integers implies that
\[ \zeta(n)/(2\pi \sqrt{-1})^{n}\in \QQ \Leftrightarrow n\hbox{ is even}.\]
 For a prime number $p$, we consider the Kubota-Leopoldt $p$-adic zeta function $\zeta_{p}$, that interpolates the rational values of the Riemann zeta function at non-positive integers. When $n$ is even, we know that $\zeta_{p}(n)=0$ (cf.~\cite{KL64, Col82, F04}). So this reveals the following interesting phenomena between the archimedean world and its $p$-adic counter part: for an integer $n\geq 2$,
 \[ \zeta(n)/(2\pi \sqrt{-1})^{n}\in \QQ\Leftrightarrow n\hbox{ is even } \Rightarrow \zeta_{p}(n)=0    .\]
 Conjecturally, the reverse direction above is valid. Let $\NN$ be the set of positive integers.
It is known by \cite{S81} that for $m\in \NN$, $\zeta_{p}(2m+1)$ is nonzero when $p$ is regular or $(p-1)|2m$.

We note that for an integer $n\geq2$, $\zeta(n)=\Li_{n}(1)$, where $\Li_{n}$ is the $n$th polylogarithm given by
\[\Li_{n}(z):=\sum_{m=1}^{\infty}\frac{z^{m} }{m^{n} } .\] For a fixed prime number $p$, we let $\CC_{p}$ be the $p$-adic completion of a fixed algebraic closure of the $p$-adic numbers $\QQ_{p}$. The series $\Li_{n}(z)$ converges $p$-adically on the open unit disc and we denote this function by $\Li_{n, p}$ which is called the $p$-adic $n$th polylogarithm. For each branch parameter $a\in \CC_{p}$ of the $p$-adic logarithm, we note that by \cite{F04, Col82} $\Li_{n,p}$ can be analytically continued to $\CC_{p} \smallsetminus \{ 1 \}$, and we denote by $\Li_{n,p}^{a}$ its analytically continued function. Furusho~\cite{F04} showed that $\zeta_{p}(n)$ is the limit of $\Li_{n,p}^{a}$ when ``$z\rightarrow 1$'' in the sense of \cite{F04}, and $\zeta_{p}(n)$ is independent of choices of branch parameters $a$.

We fix embeddings $\overline{\QQ} \hookrightarrow \CC$ and $\overline{\QQ} \hookrightarrow \CC_{p}$. Let $u$ be a nonzero algebraic number for which $\Li_{n}(u)$ converges with respect to the archimedean absolute value.  Inspired by the conjectural equivalence between the rationality of $\zeta(n)/(2\pi \sqrt{-1})^{n}$ and the vanishing of  $\zeta_{p}(n)$, for a fixed branch parameter $a\in\CC_{p}$ of the $p$-adic logarithm one naturally asks  whether there is a criterion $\spadesuit_{a}$ for the rationality of $\Li_{n}(u)/(2\pi \sqrt{-1})^{n}$ in terms of the vanishing of $\Li_{n,p}^{a}(u)$:
\[ \Li_{n}(u)/(2\pi\sqrt{-1})^{n}\in \QQ \Leftrightarrow   \spadesuit_{a}   \Leftrightarrow   \Li_{n,p}^{a}(u)=0 .\]
For example, for $n \geq 2$ and ``$u \to 1$'', conjecturally $\spadesuit_{a}=$ \lq\lq$n \hbox{ is even}$\rq\rq.

 The main purpose of this paper is to give a positive answer of the analogous question above for the Carlitz multiple polylogarithms (abbreviated as CMPLs), which was introduced by the first author of the present paper in \cite{C14} generalizing the notion initiated by Anderson and Thakur~\cite{AT90}.

\subsection{Depth one case in function fields}

Let $A:=\FF_{q}[\theta]$ be the polynomial ring in the variable $\theta$ over the finite field $\FF_{q}$ of $q$ elements, and $k$ be its quotient field. Denote by $\infty$ the infinite place of $k$ with an associated absolute value $|\cdot|_{\infty}$. Let $k_{\infty}:=\laurent{\FF_{q}}{1/\theta}$ be the $\infty$-adic completion of $k$. We fix an algebraic closure $\overline{k_{\infty}}$ of $k_{\infty}$, and let $\CC_{\infty}$ be the $\infty$-adic completion of $\overline{k_{\infty}}$. We further fix a fundamental period $\tilde{\pi}\in \overline{k_{\infty}}^{\times}$ of the Carlitz $\FF_{q}[t]$-module $\bC$, where $t$ is an independent variable. Note that $\bC$ and $\tilde{\pi}$ play the analogous roles of $\mathbb{G}_{m}$ and $2\pi \sqrt{-1}$ respectively in the function field setting. A positive integer $n$ is called $A$-even if $(q-1)|n$, as $q-1$ is the cardinality of the unit group $A^{\times}$.

\subsubsection{Special zeta values}
Let $A_{+}$ be the set of monic polynomials in $A$ and consider the Carlitz zeta value at $n\in \NN$,
\[ \zeta_{A}(n):=\sum_{a\in A_{+}}\frac{1}{a^{n}}\in k_{\infty}^{\times}  .\]
In \cite{Ca35}, Carlitz derived an analogue of Euler's formula on values of the Riemann zeta function at even positive integers. More precisely, we have the following consequence: for a positive integer $n$, we have the equivalence
\[   \zeta_{A}(n)/\tilde{\pi}^{n}\in k\Leftrightarrow n \hbox{ is } A\hbox{-even}.   \]

Let $v$ be a monic prime of $A$, and let $k_{v}$ be the completion of $k$ with respect to the normalized $v$-adic absolute value $| \cdot |_{v}$ associated to the place $v$. We fix an algebraic closure $\overline{k_{v}}$ of $k_{v}$, and let $\CC_{v}$ be the $v$-adic completion of $\overline{k_{v}}$. Let $\bar{k}$ be an algebraic closure of $k$, and fix the natural embeddings $\bar{k}\hookrightarrow \overline{k_{\infty}}$ and $\bar{k}\hookrightarrow \overline{k_{v}}$ as $k\subseteq k_{\infty}$ and $k\subseteq k_{v}$. For a positive integer $n$, in analogy with the special value at $n$ of the Kubota-Leopoldt $p$-adic zeta function $\zeta_{p}(n)$ we consider the $v$-adic Goss zeta function at $n$ denoted by $\zeta_{A,v}(n) \in k_{v}$ (see \cite{Go79}). Goss~\cite{Go79} showed that $\zeta_{A,v}(n)$ vanishes for $A$-even $n$, and Yu~\cite{Yu91} showed the transcendence of $\zeta_{A,v}(n)$ for $A$-odd $n$ (ie., $(q-1)\nmid n$), and therefore we have the following complete story: for $n\in \NN$,
\begin{equation}\label{E:CorrZeta}
 \zeta_{A}(n)/\tilde{\pi}^{n}\in k\Leftrightarrow n\hbox{ is }A\hbox{-even} \Leftrightarrow \zeta_{A,v}(n)=0.
\end{equation}
However, Yu~\cite{Yu91} showed that $\zeta_{A}(n)/\tilde{\pi}^{n}\in\bar{k}\Leftrightarrow \zeta_{A}(n)/\tilde{\pi}^{n}\in k$ and hence we have the following equivalence:
\[ \zeta_{A}(n)/\tilde{\pi}^{n}\in \bar{k}\Leftrightarrow n\hbox{ is }A\hbox{-even} \Leftrightarrow \zeta_{A,v}(n)=0.\]

\subsubsection{Carlitz polylogarithms} Put $L_{0}:=1$ and $L_{i}:=(\theta-\theta^{q})\cdots(\theta-\theta^{q^{i}})$ for $i\in \NN$. Let $\log_{\bC}$ be the logarithm of the Carlitz $\FF_{q}[t]$-module $\bC$ given by the power series
\[ \log_{\bC}(z):=\sum_{i=0}^{\infty}\frac{z^{q^{i}}}{L_{i}} \hbox{ (see \cite{Go96, T04})} .\]
In analogy with the classical polylogarithm, Anderson and Thakur~\cite{AT90} defined the $n$th Carlitz polylogarithm for $n\in \NN$:
\[ \Li_{n}(z):=\sum_{i=0}^{\infty} \frac{z^{q^{i}}}{L_{i}^{n}}   .\]
Unlike the simple identity between $\zeta(n)$ and the $n$th polylogarithm at $1$ in the classical case, Anderson and Thakur~\cite{AT90} proved that $\zeta_{A}(n)$ is a $k$-linear combination of $\Li_{n}$ at some explicit integral points in $A$.

 For a positive integer $n$, we denote by $\bC^{\otimes n}$ the $n$th tensor power of the Carlitz module $\bC$ introduced by Anderson-Thakur~\cite{AT90}. Let $u\in \ok^{\times}$ and put $\bv:=(0,\ldots,0,u)^{\tr}\in \bC^{\otimes n}(\ok)$. We further put the condition:
\[ \clubsuit_{1}:=`` \bv=(0,\ldots,0,u)^{\tr} {\hbox{ is an }}\FF_{q}[t]{\hbox{-torsion in }}\bC^{\otimes n}(\bar{k})"   .\]
We assume $|u|_{\infty}<|\theta|_{\infty}^{\frac{nq}{q-1}}$ and note that $\Li_{n}(u)$ converges in $\CC_{\infty}$.
Then combining the theories of Anderson-Thankur ~\cite{AT90} and Yu ~\cite{Yu91} derives the following equivalence:
\begin{equation}\label{E:PlogCorr1}
\Li_{n}(u)/\tilde{\pi}^{n}\in \bar{k} \Leftrightarrow \clubsuit_{1}\hbox{ is valid}.
\end{equation}

Unlike the analytic continuation of classical $p$-adic polylogarithms, we shall mention here that the convergence domain of $\Li_{n}$ with respect to the $v$-adic absolute value is the open unit disc $\left\{z\in \CC_{v};|z|_{v}<1 \right\}$ and we only extend its convergence domain to the $v$-adic closed unit disc (see Definition~\ref{analytic-continuation-CMPLs} and Remark~\ref{Rmk:analytic-continuation-CMPLs}). We denote by $\Li_{n}(z)_{v}$ the extended function of $\Li_{n}(z)$ on the $v$-adic closed unit disc. Let $u\in\ok^{\times}$ satisfy $|u|_{v}\leq 1$ (so $\Li_{n}(u)_{v}$ is defined). Then by Yu's theory~\cite{Yu91} (see Theorem~\ref{T:VanishingCriterion} also) one can derive the following equivalence
\begin{equation}\label{E:PlogCorr2}
  \Li_{n}(u)_{v}=0 \Leftrightarrow \clubsuit_{1}\hbox{ is valid},
\end{equation}
and hence one establishes the principle: for $u\in \bar{k}^{\times}$ with $|u|_{\infty}<|\theta|_{\infty}^{\frac{nq}{q-1}}$ and $|u|_{v}\leq 1$ we have
\[  \Li_{n}(u)/\tilde{\pi}^{n}\in \bar{k}\Leftrightarrow \clubsuit_{1}\hbox{ is valid}\Leftrightarrow    \Li_{n}(u)_{v}=0 .\] The main result in this paper is to generalize this principle to higher depths described below.

\subsection{Higher depths case} In analogy with the classical multiple polylogarithms, the first author of the present paper introduced the Carlitz multiple polylogarithm (abbreviated as CMPL) for each $\fs=(s_{1},\ldots,s_{r})\in \NN^{r}$ (see~\cite{C14}):
\[\Li_{\fs}(z_{1},\ldots,z_{r}):=\sum_{i_{1}>\cdots>i_{r}\geq 0} \frac{z_{1}^{q^{i_{1}}}\cdots z_{r}^{q^{i_{r}}}}  {L_{i_{1}}^{s_{1}}\cdots L_{i_{r}}^{s_{r}}},  \] whose weight is defined to be ${\rm{}wt}(\fs):=\sum_{i=1}^{r}s_{i}$ and whose depth is defined to be $r$. It is shown in \cite{C14} that each multizeta value $\zeta_{A}(\fs)$ initiated by Thakur~\cite{T04} is a $k$-linear combination of $\Li_{\fs}$ at some integral points in $A^{r}$, generalizing the formula of Anderson-Thakur to the higher depth case.

 Fixing any $\fs=(s_{1},\ldots,s_{r})\in \NN^{r}$ and $\bu=(u_{1},\ldots,u_{r})\in (\ok^{\times})^{r}$, one has an associated $t$-module $G:=G_{\fs,\bu}$ defined over $\ok$ and an associated algebraic point $\bv:=\bv_{\fs,\bu}\in G(\ok)$ given in \cite{CPY14} (for avoiding the heavy notation, we drop $\fs$ and $\bu$ without confusion when it is clear from the context). Suppose that $|u_{i}|_{\infty} < |\theta|_{\infty}^{\frac{s_{i} q}{q-1}}$ for each $i=1,\ldots,r$. It is shown in \cite{CPY14} that
\[ \clubsuit_{r}:=``\bv\hbox{ is an }\FF_{q}[t]\hbox{-torsion point in }G(\ok)"\]
is valid if and only if
\[ \Li_{(s_{1},\ldots,s_{r})}(u_{1},\ldots,u_{r})/\tilde{\pi}^{s_{1}+\cdots+s_{r}},\Li_{(s_{2},\ldots,s_{r})}(u_{2},\ldots,u_{r})/\tilde{\pi}^{s_{2}+\cdots+s_{r}},\ldots,\Li_{s_{r}}(u_{r})/\tilde{\pi}^{s_{r}}       \]
are simultaneously in $k$. Note that the value $\Li_{\fs}(\bu)$ is called {\it{Eulerian}} if the ratio $\Li_{\fs}(\bu)/\tilde{\pi}^{\wt(\fs)}$ lies in $k$ (cf.~\cite{T04, CPY14}), and by \cite{C14} we know that $\Li_{\fs}(\bu)$ is Eulerian if and only if $\Li_{\fs}(\bu)/\tilde{\pi}^{\wt(\fs)}\in\ok$.

 Let $\mathcal{O}_{\CC_{v}}$ be the closed unit disc in $\CC_{v}$. In \S\S~\ref{Sec:Analytic continuation} we show that $\Li_{\fs}$ can be analytically continued to $\mathcal{O}_{\CC_{v}}^{r}$ and we denote by $\Li_{\fs}({\bf{z}})_{v}$ the extended function of $\Li_{\fs}$ for ${\bf{z}}\in \mathcal{O}_{\CC_{v}}^{r}$. Our main result, stated as Theorem~\ref{T:VanishingCriterion}, is to prove the $v$-adic counterpart fitting into the correspondence mentioned above. Given $(u_{1},\ldots,u_{r})\in (\ok^{\times})^{r}\cap \mathcal{O}_{\CC_{v}}^{r}$ with $|u_{i}|_{\infty} < |\theta|_{\infty}^{\frac{s_{i} q}{q-1}}$ for each $i$, we show that
$ \clubsuit_{r}$ is valid if and only if
\[ \Li_{(s_{1},\ldots,s_{r})}(u_{1},\ldots,u_{r})_{v}=\Li_{(s_{2},\ldots,s_{r})}(u_{2},\ldots,u_{r})_{v}=\cdots=\Li_{s_{r}}(u_{r})_{v}=0.\]  Note that when we restrict $r=1$, then we recover the result in (\ref{E:PlogCorr2}).

We shall mention that for the $\fs$ and $\bu$ given above, if $\Li_{(s_{1},\ldots,s_{r})}(u_{1},\ldots,u_{r})$ is Eulerian, then the following values \[\Li_{(s_{2},\ldots,s_{r})}(u_{2},\ldots,u_{r}),\ldots,\Li_{s_{r}}(u_{r})\]
are automatically Eulerian. This fact is proven in \cite{CPY14} using the ABP-criterion~\cite{ABP04}, which is a strong tool in the transcendence theory of $\infty$-adic case. However, we do not know whether its analog is true in the $v$-adic case, and additional work is necessary to develop $v$-adic transcendence theory.

A similar criterion for Thakur's multizeta values (abbreviated MZVs) to be {\it{Eulerian}} is given in \cite{CPY14}. One then naturally asks whether one has the criterion for its $v$-adic counterpart as $v$-adic MZVs were introduced in \cite{T04}. Such a  criterion is related to expressing the given $v$-adic MZV as the coordinate logarithm of a certain $t$-module, and it is not clear to the authors at this moment. In the depth one case, the far reaching theory of \cite{AT90} does relate both $\zeta_{A}(n)$ and $\zeta_{A,v}(n)$ to the logarithm of $\bC^{\otimes n}$, but for higher depth MZVs it is an open question although certain $\infty$-adic MZVs of higher depths are worked out in \cite{C15} by $t$-motivic methods.

The first step of proving the main result above is to write down the $t$-module $G$ and  the algebraic point $\bv$ explicitly. We mention that in \cite{CPY14}, $G$ and $\bv$ are only theoretically constructed without being written down explicitly. We further use some techniques in \cite{AT90} to compute the coefficient matrices of the logarithm of $G$ explicitly, and then show that the Carlitz multiple star polylogarithms (abbreviated as CMSPLs) defined in (\ref{E:star-CMPL}) occur as the coordinate logarithms of $G$. Then we establish an identity of the CMSPLs (stated as Lemma~4.1.3) in terms of linear combination of products of CMPLs and CMSPLs. These properties together with Yu's theory~\cite{Yu91} enable us to derive the desired results.

\subsection*{Acknowledgements}
We are grateful to H.~Furusho and J.~Yu for their discussions, which inspire this project. We thank M.~Papanikolas and D.~Thakur for their useful comments, and thank the referees for their suggestions, which greatly improve the exposition of this paper. The project was initiated when the second author visited NCTS and he would like to thank NCTS for their kind support.

\section{Preliminaries}
\subsection{Notation}

We adopt the following notation. \\
\begin{longtable}{p{0.5truein}@{\hspace{5pt}$=$\hspace{5pt}}p{5truein}}
$\FF_q$ & the finite field with $q$ elements, for $q$ a power of a
prime number $p$. \\
$\theta$, $t$ & independent variables. \\
$A$ & $\FF_q[\theta]$, the polynomial ring in the variable $\theta$ over $\FF_q$.
\\
$v$ & a monic irreducible polynomial in A.
\\
$k$ & $\FF_q(\theta)$, the fraction field of $A$.\\
$k_v$ &  the completion of $k$ with
respect to the place $v$.\\
$\overline{k_v}$ & a fixed algebraic closure of $k_v$.\\
$\ok$ & the algebraic closure of $k$ in $\overline{k_v}$.\\
$\CC_v$ & the completion of $\overline{k_v}$ with respect to
the canonical extension of $v$.\\
$|\cdot|_{v}$& a fixed absolute value on $\CC_{v}$ so that $|v|_{v}=1/q^{\tiny{\deg v}}$.\\
$\mathbb{G}_{a}$& the additive group scheme over $A$.\\

\end{longtable}

\subsection{CMPLs and CMSPLs} \label{subsection-CMPLs-CMSPLs}
We recall the Carlitz multiple polylogarithms \cite{C14} that are generalization of the polylogarithms initiated in \cite{AT90}. Put $L_{0}:=1$ and $L_{i}:=(\theta-\theta^{q})\cdots(\theta-\theta^{q^{i}})$ for $i\in \NN$. Given any $\fs:=(s_{1},\ldots,s_{r})\in \NN^{r}$, the associated Carlitz multiple polylogarithm (abbreviated as CMPL)
and the Carlitz multiple star polylogarithm (abbreviated as CMSPL, compared with the terminology in \cite{FKMT15}) are defined by the series

\[
 \Li_{\fs}(z_{1},\ldots,z_{r}):=\sum_{i_{1}>\cdots> i_{r}\geq 0} \frac{z_{1}^{q^{i_{1}}}\cdots z_{r}^{q^{i_{r}}}} {L_{i_{1}}^{s_{1}}\cdots L_{i_{r}}^{s_{r}}}
  \]
and
\begin{equation}\label{E:star-CMPL}
 \Li^{\star}_{\fs}(z_{1},\ldots,z_{r}):=\sum_{i_{1} \geq \cdots \geq i_{r}\geq 0} \frac{z_{1}^{q^{i_{1}}}\cdots z_{r}^{q^{i_{r}}}} {L_{i_{1}}^{s_{1}}\cdots L_{i_{r}}^{s_{r}}}.
  \end{equation}
We denote by $\Li_{\fs}(z_{1},\ldots,z_{r})_{v}$ and $\Li^{\star}_{\fs}(z_{1},\ldots,z_{r})_{v}$
while working with $v$-adic convergence. Both series converge when $|z_{1}|_{v} < 1$ and
$|z_{2}|_{v}, \dots, |z_{r}|_{v} \leq 1$. Indeed, since $v^{2}$ does not divide $\theta - \theta^{q^{i}}$, if $z_{i}$'s satisfy the above condition, then we have
\[
\left| \frac{z_{1}^{q^{i_{1}}}\cdots z_{r}^{q^{i_{r}}}} {L_{i_{1}}^{s_{1}}\cdots L_{i_{r}}^{s_{r}}} \right|_{v}
\leq |v|_{v}^{-(s_{1} i_{1} + \cdots + s_{r} i_{r})} |z_{1}|_{v}^{q^{i_{1}}} \cdots |z_{r}|_{v}^{q^{i_{r}}}
\leq |v|_{v}^{- \wt(\fs) i_{1}} |z_{1}|_{v}^{q^{i_{1}}}
\to 0 \ (i_{1} \to \infty).
\]

\subsection{$t$-modules} In this section, we review the theory of $t$-modules introduced by Anderson~\cite{A86}. For an $A$-algebra $R$, we denote by $\tau$ the Frobenius $q$th power operator $\tau:=\left( x\mapsto x^{q} \right):R\rightarrow R$. For convenience, we extend the action of $\tau$ on the matrices with entries in $R$ by componentwise action. We denote by $\CC_{v}[ \tau]$ the non-commutative polynomial ring generated by $\tau$ over $\CC_{v}$ subject to the relation
\[ \tau \alpha=\alpha^{q} \tau\hbox{ for }\alpha\in \CC_{v}.  \]For each $\varphi \in \Mat_{d}(\CC_{v}[\tau])$, we write $\varphi=\sum_{i=0}^{\infty}\alpha_{i}\tau^{i}$ with each $\alpha_{i}\in \Mat_{d}(\CC_{v})$ and $\alpha_{i}=0$ for $i\gg 0$, and further define $\partial \varphi:=\alpha_{0}$.

Let $t$ be a new variable and $d$ be a positive integer. By a $d$-dimensional $t$-module, we mean a pair $G=({\mathbb{G}_{a}^{d}},\rho)$, where $\rho$ is an $\FF_{q}$-linear ring homomorphism
\[ \rho:\FF_{q}[t]\rightarrow \Mat_{d}(\CC_{v}[\tau]) \]
so that $\partial \rho_{t}-\theta I_{d}$ is a nilpotent matrix. Here we denote by $\rho_{a}$ the image of $a \in \FF_{q}[t]$  by $\rho$. For a subring $A\subseteq R \subseteq \CC_{v}$, we say that the $t$-module is defined over $R$ if all the coefficient matrices of $\rho_{t}$ are in $\Mat_{d}(R)$. In this situation, $\GG_{a}^{d}(R)=R^{d}$ has an $\FF_{q}[t]$-module structure via the map $\rho$.

Let $K$ be either $\bar{k}$ or $\CC_{v}$ and let $G=(\GG_{a}^{d},\rho)$ be a $d$-dimensional $t$-module defined over $K$. Then we have the unique {\it{exponential function}} of $G$ which is an $\FF_{q}$-linear $d$-variable power series of the form $\exp_{G}=I_{d}+ \sum_{i=1}^{\infty}\alpha_{i} \tau^{i}$ with $\alpha_{i}\in \Mat_{d}(K)$, satisfying the following identity:
\begin{equation}\label{E:FunEquaExp}
 \exp_{G} \circ \partial \rho_{a}=\rho_{a}\circ \exp_{G} \hbox{ for all }a\in \FF_{q}[t].
\end{equation}
The logarithm of $G$ denoted by $\log_{G}$, is defined to be the formal inverse of $\exp_{G}$ that has the property:
\begin{equation}\label{E:FunEquaLog}
 \log_{G}\circ \rho_{a}=\partial \rho_{a} \circ \log_{G} \hbox{ for all }a\in \FF_{q}[t] .
\end{equation}
The logarithm $\log_{G}$ will be the primary interest for our study as it could provide a rich source of transcendental values (see~\cite{Yu91, Yu97}).

%%%%%%%%%%%%%%%%%%%%%%%%%%%%%%%%%%%%%%%%%%%%%%%%%%
\section{Computation on the logarithms}
In this section, our goal is to give an explicit construction of an appropriate $t$-module $G$ over $\bar{k}$ associated to an index $\fs\in \NN^{r}$ and an algebraic point $\bu\in ({\ok}^{\times})^{r}$, and show that the CMSPLs in question occur as coordinate logarithms of $G$ at an explicit algebraic point of $G$.

\subsection{Construction of the $t$-module $G$}
In what follows, we fix $\fs = (s_{1}, \dots, s_{r})\in \NN^{r}$ and $\bu = (u_{1}, \dots, u_{r}) \in (\ok^{\times})^{r}$.
For $1 \leq \ell \leq r$,
we set $d_{\ell} := s_{\ell} + \cdots + s_{r}$ and $d := d_{1} + \cdots + d_{r}$.
Let $B$ be a $d \times d$-matrix of the form

\[
\left( \begin{array}{c|c|c}
B[11] & \cdots & B[1r] \\ \hline
\vdots & & \vdots \\ \hline
B[r1] & \cdots & B[rr]
\end{array} \right)
\]
where $B[\ell m]$ is a $d_{\ell} \times d_{m}$-matrix for each $\ell$ and $m$.
In this paper, $B[\ell m]$ is called the $(\ell, m)$-th block matrix of $B$.

For $1 \leq \ell \leq m \leq r$, we set

\[
N_{\ell} := \left(
\begin{array}{ccccc}
0 & 1 & 0 & \cdots & 0 \\
& 0 & 1 & \ddots & \vdots \\
& & \ddots & \ddots & 0 \\
& & & \ddots & 1 \\
& & & & 0
\end{array}
\right)
\in \Mat_{d_{\ell}}(\ok),
\]

\[
N := \left(
\begin{array}{cccc}
N_{1} & & & \\
& N_{2} & & \\
& & \ddots & \\
& & & N_{r}
\end{array}
\right)
\in \Mat_{d}(\ok),
\]

\[
E[\ell m] := \left(
\begin{array}{cccc}
0 & \cdots & \cdots & 0 \\
\vdots & \ddots & & \vdots \\
0 & & \ddots & \vdots \\
1 & 0 & \cdots & 0
\end{array}
\right)
\in \Mat_{d_{\ell} \times d_{m}}(\ok) \ \ \ (\mathrm{if} \ \ell = m),
\]

\[
E[\ell m] := \left(
\begin{array}{cccc}
0 & \cdots & \cdots & 0 \\
\vdots & \ddots & & \vdots \\
0 & & \ddots & \vdots \\
(-1)^{m-\ell} \prod_{e=\ell}^{m-1} u_{e} & 0 & \cdots & 0
\end{array}
\right)
\in \Mat_{d_{\ell} \times d_{m}}(\ok) \ \ \ (\mathrm{if} \ \ell < m),
\]

\[
E := \left(
\begin{array}{cccc}
E[11] & E[12] & \cdots & E[1r] \\
& E[22] & \ddots & \vdots \\
& & \ddots & E[r-1,r] \\
& & & E[rr]
\end{array}
\right)
\in \Mat_{d}(\ok).
\]
We also define

\[
E_{m} := \left( \begin{array}{c|c|c}
0 & 0 & 0 \\ \hline
0 & E[mm] & 0 \\ \hline
0 & 0 & 0
\end{array} \right) \in \Mat_{d}(\ok)
\]
to be the $d \times d$-matrix such that the $(m,m)$-th block matrix is $E[mm]$
and the others are zero matrices.

We define the $t$-module $G = G_{\fs, \bu} := (\GG_{a}^{d}, \rho)$ by
\begin{equation}\label{E:Explicit t-moduleCMPL}
  \rho_{t} = \theta I_{d} + N + E \tau
  \in \Mat_{d}(\ok[\tau]).
\end{equation}
Note that $G$ depends  only on $u_{1},\ldots,u_{r-1}$.

\begin{example}
When $r = 1$, we have
\[
\rho_{t}=C^{\otimes s_{1}}_{t} :=
\left( \begin{array}{ccccc}
\theta & 1 & & & \\
& \theta & 1 & & \\
& & \ddots & \ddots & \\
& & & \theta & 1 \\
\tau & & & & \theta
\end{array}
\right)
\in \Mat_{d}(\ok[\tau])
\]
which is called the $s_{1}$th tensor power of the Carlitz module.

When $r = 2$, we have
\[
\rho_{t} = \left( \begin{array}{c|c}
C^{\otimes(s_1 + s_2)}_{t} &
\begin{array}{ccc} 0 & \cdots & 0 \\ \vdots & & \vdots \\ -u_{1}\tau & \cdots & 0 \end{array}
\\ \hline
0 & \vphantom{ \begin{array}{ccc}
0 & \cdots & 0 \\
\vdots & & \vdots \\
-u_{1}\tau & \cdots & 0
\end{array}}
C^{\otimes s_2}_{t}
\end{array}
\right)
\in \Mat_{d}(\ok[\tau]).
\]
When $r = 3$, we have
\[
  \rho_{t} = \left(
  \begin{array}{c|c|c}
C^{\otimes(s_{1} + s_{2} + s_{3})}_{t} &
   \begin{array}{ccc}
    0 & \cdots & 0 \\
    \vdots & & \vdots \\
    -u_{1}\tau & \cdots & 0
   \end{array} &
   \begin{array}{ccc}
    0 & \cdots & 0 \\
    \vdots & & \vdots \\
    u_{1} u_{2}\tau & \cdots & 0
   \end{array}
\\ \hline
0 & C^{\otimes(s_{2} + s_{3})}_{t} &
   \begin{array}{ccc}
    0 & \cdots & 0 \\
    \vdots & & \vdots \\
    -u_{2}\tau & \cdots & 0
   \end{array}
   \\ \hline
0 & 0 &
\vphantom{   \begin{array}{ccc}
    0 & \cdots & 0 \\
    \vdots & & \vdots \\
    -u_{1}\tau & \cdots & 0
   \end{array}}
   C^{\otimes s_{3}}_{t}
  \end{array}
  \right)
  \in \Mat_{d}(\ok[\tau]).
\]
\end{example}

\subsection{Logarithm of $G$}
We denote by
\[
\log_{G} = \sum_{i \geq 0} P_{i} \tau^{i}
\]
the logarithm of the $t$-module $G$,
where we put
\[
P_{0} := I_{d}, \ \
P_{i} := \left(
\begin{array}{ccc}
P_{i}[11] & \cdots & P_{i}[1r] \\
\vdots & & \vdots \\
P_{i}[r1] & \cdots & P_{i}[rr]
\end{array}
\right)
\in \Mat_{d}(\ok), \ \
P_{i}[\ell m] \in \Mat_{d_{\ell} \times d_{m}}(\ok).
\]

\begin{proposition}\label{prop_log_coeff}
We have $P_{i}[\ell m] = 0$ for $\ell > m$.
For $\ell \leq m$, if we denote by $y_{i}[\ell m] := P_{i}[\ell m]_{d_{\ell} d_{m}}$, which is
the lower most right corner of $P_{i}[\ell m]$,
then we have
\begin{eqnarray}\label{eq_log_l=m}
y_{i}[\ell m] = \dfrac{1}{L_{i}^{d_{m}}} \ \ \ (\mathrm{if} \ \ell = m),
\end{eqnarray}
and
\begin{eqnarray}\label{eq_log_l<m}
y_{i}[\ell m] = (-1)^{m-\ell} \sum_{0 \leq i_{\ell} \leq \cdots \leq i_{m-1} < i}
\dfrac{u_{\ell}^{q^{i_{\ell}}} \cdots u_{m-1}^{q^{i_{m-1}}}}
{L_{i_{\ell}}^{s_{\ell}} \cdots L_{i_{m-1}}^{s_{m-1}} L_{i}^{d_{m}}} \ \ \ (\mathrm{if} \ \ell < m).
\end{eqnarray}
\end{proposition}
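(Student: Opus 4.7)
The natural starting point is the defining functional equation $\log_G \circ \rho_t = \partial \rho_t \circ \log_G$. Writing $\log_G = \sum_{i} P_i \tau^i$, substituting $\rho_t = \theta I_d + N + E\tau$ and $\partial \rho_t = \theta I_d + N$, and using that $N \in \Mat_d(\FF_q)$ commutes past $\tau$, I match coefficients of $\tau^i$ (for $i \geq 1$) to obtain the matrix recurrence
\[
(\theta - \theta^{q^i}) P_i + N P_i - P_i N = P_{i-1} E^{(i-1)}, \quad P_0 = I_d,
\]
where $E^{(i-1)}$ denotes the entry-wise $q^{i-1}$-th power. Since $N$ is block-diagonal with diagonal blocks $N_\ell$ and $E$ is block upper-triangular, projecting onto the $(\ell,m)$ block yields
\[
(\theta - \theta^{q^i}) P_i[\ell m] + N_\ell P_i[\ell m] - P_i[\ell m] N_m = \sum_{j \leq m} P_{i-1}[\ell j]\, E^{(i-1)}[jm].
\]

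The upper-triangular claim $P_i[\ell m] = 0$ for $\ell > m$ then follows by induction on $i$: the base case $P_0 = I_d$ is immediate, and for $i \geq 1$ every summand on the right has $j \leq m < \ell$ and therefore vanishes by the inductive hypothesis; the linear operator $X \mapsto (\theta - \theta^{q^i}) X + N_\ell X - X N_m$ is scalar plus nilpotent, hence invertible, forcing $P_i[\ell m] = 0$.

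To establish the entry formulas, the strategy is to collapse the matrix recursion into a scalar recursion in the lower-right corners $y_i[\ell m]$. Each $E^{(i-1)}[jm]$ has its unique nonzero entry at position $(d_j, 1)$, so the right hand side of the block recursion has support in column $1$ alone. Reading the $(d_\ell, b)$-entries along row $d_\ell$, where $(N_\ell P_i[\ell m])_{d_\ell, b} = 0$ for every $b$, gives for $b \geq 2$
\[
P_i[\ell m]_{d_\ell,\,b-1} = (\theta - \theta^{q^i})\, P_i[\ell m]_{d_\ell, b},
\]
and cascading from $b = d_m$ down to $b = 2$ produces $P_i[\ell m]_{d_\ell, 1} = (\theta - \theta^{q^i})^{d_m - 1} y_i[\ell m]$. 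The $(d_\ell, 1)$-entry (with the convention $P_i[\ell m]_{d_\ell, 0} = 0$) then yields the scalar recursion
\[
(\theta - \theta^{q^i})^{d_m}\, y_i[\ell m] = y_{i-1}[\ell m] + \sum_{\ell \leq j < m} (-1)^{m-j} \Bigl(\prod_{e=j}^{m-1} u_e\Bigr)^{q^{i-1}} y_{i-1}[\ell j],
\]
with initial condition $y_0[\ell m] = \delta_{\ell m}$. For $\ell = m$ the sum is empty, and combined with $L_i = L_{i-1}(\theta - \theta^{q^i})$ the recursion gives $y_i[\ell \ell] = 1/L_i^{d_\ell}$ at once.

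For $\ell < m$, the plan is to verify that the closed form $\tilde y_i[\ell m]$ given by the claimed formula satisfies the same scalar recursion, whence $\tilde y = y$ by induction on $i$. Multiplying through by $(\theta - \theta^{q^i})^{d_m}$ and using $L_i^{d_m} = (\theta - \theta^{q^i})^{d_m} L_{i-1}^{d_m}$, the resulting sum is partitioned according to the least index $p \in \{\ell, \ldots, m-1\}$ with $i_p = i - 1$, taking $p = m$ when no such index exists. The block with $p = m$ recovers $\tilde y_{i-1}[\ell m]$, while for $\ell \leq p < m$ the telescoping identity $s_p + \cdots + s_{m-1} + d_m = d_p$ collapses the product of $L$-factors into $L_{i-1}^{d_p}$ and recognizes the block as $(-1)^{m-p}\bigl(\prod_{e=p}^{m-1} u_e\bigr)^{q^{i-1}} \tilde y_{i-1}[\ell p]$. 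The main obstacle is precisely this last combinatorial step: tracking the overall sign $(-1)^{m-\ell}$ and isolating the rightmost run of summation indices equal to $i - 1$ so that each partial sum is recognizable as a $\tilde y_{i-1}[\ell p]$. Once this bookkeeping is carried out, the induction closes.
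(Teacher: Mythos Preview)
Your proof is correct and follows essentially the same strategy as the paper: derive the matrix recurrence from the functional equation, establish block upper-triangularity by induction on $i$, reduce to a scalar recurrence for the corner entries $y_i[\ell m]$, and then verify the closed form satisfies that recurrence by partitioning according to where the rightmost run of maximal indices begins (the paper organizes the same computation as an induction on $m$ followed by a telescoping sum over $i$, which amounts to the same bookkeeping).

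The one place your argument genuinely streamlines the paper is the extraction of the scalar recurrence: the paper inverts the operator $X\mapsto(\theta-\theta^{q^{i+1}})X+NX-XN$ via the $\ad(N)$ expansion $P_{i+1}=-\sum_{j}\ad(N)^{j}(P_iE^{(i)})/(\theta^{q^{i+1}}-\theta)^{j+1}$ and then sandwiches with $E_\ell^{\rtr}$ and $E_m$ to isolate the corner, which requires tracking $E^{(i)}N^{j}E_m$ for all $j$. You instead read the last row of the block recursion directly, using only that row $d_\ell$ of $N_\ell$ vanishes and that $E^{(i-1)}[jm]$ is supported in column $1$; this bypasses the $\ad(N)$ formalism entirely and arrives at the identical scalar recursion with less overhead.
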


\begin{proof}
For any matrix $M = (M_{i j})$ with $M_{i j} \in \CC_{v}$, we set $M^{(s)} := (M_{i j}^{q^{s}})$ for $s\in \ZZ$.
From the functional equation

\[ \log_{G}\circ \rho_{t}=\partial \rho_{t}\circ \log_{G},  \]
by comparing the coefficient matrices we have the following identity

\[
P_{i+1} (\theta^{q^{i+1}} I_{d} + N) + P_{i} E^{(i)} = (\theta I_{d} + N) P_{i+1} \hbox{ for }i\in \ZZ_{\geq 0}.
\]
Given two square matrices of the same size $X,Y$,  we denote by $\ad(X)^{0}(Y) := Y$ and $\ad(X)^{j+1}(Y) := X (\ad(X)^{j}(Y)) - (\ad(X)^{j}(Y)) X$. From the identity above, we obtain

\[
P_{i+1} - \dfrac{\ad(N)^{1}(P_{i+1})}{\theta^{q^{i+1}}-\theta}
= -\dfrac{P_{i} E^{(i)}}{\theta^{q^{i+1}}-\theta}
\]
and hence for each $i\in \NN_{\geq 0}$,

\begin{eqnarray} \label{eq-P}
P_{i+1} = -\sum_{j=0}^{2d_{1}-2} \dfrac{\ad(N)^{j}(P_{i} E^{(i)})}{(\theta^{q^{i+1}}-\theta)^{j+1}}
\end{eqnarray}
because $\ad(N)^{2d_{1}-1}(P_{i+1}) = 0$ from the fact $N^{d_{1}}=0$. Therefore we have
\begin{equation}\label{E:P_i ell m}
P_{i}[\ell m] = 0 \hbox{ for }\ell > m
\end{equation}
by induction on $i$.

Letting
\[
Y =
\left( \begin{array}{c|ccc|c}
* & & * & & * \\ \hline
& & & & \\
* & & * & & * \\
& & & y & \\ \hline
* & & * & & *
\end{array}
\right)
\]
be a $d \times d$-matrix such that the lower most right corner of the $(\ell, m)$-th block matrix is $y$, we note that $E_{\ell}^{\rtr} Y E_{m}$ is of the form
\[
E_{\ell}^{\rtr} Y E_{m} =
\left( \begin{array}{c|ccc|c}
0 & & 0 & & 0 \\ \hline
& y & & & \\
0 & & 0 & & 0 \\
& & & & \\ \hline
0 & & 0 & & 0
\end{array}
\right)
\]
(the $d \times d$-matrix such that the upper most left corner of the $(\ell, m)$-th block matrix is $y$ and the others are zero matrices).

Since $E_{\ell}^{\rtr} N = 0$ and $\ad(N)^{j}(P_{i} E^{(i)})$ can be expressed as $\ad(N)^{j}(P_{i} E^{(i)})= N B + (-1)^{j} P_{i} E^{(i)} N^{j}$ for some $B \in \Mat_{d}(\ok)$,
we have

\begin{eqnarray} \label{eq-NPE}
E_{\ell}^{\rtr} P_{i+1} E_{m}
&=& -\sum_{j=0}^{2d_{1}-2} \dfrac{E_{\ell}^{\rtr} \ad(N)^{j}(P_{i} E^{(i)}) E_{m}}
{(\theta^{q^{i+1}}-\theta)^{j+1}} \\
&=& \sum_{j=0}^{2d_{1}-2} \dfrac{E_{\ell}^{\rtr} P_{i} E^{(i)} N^{j} E_{m}}
{(\theta-\theta^{q^{i+1}})^{j+1}} \nonumber \\
&=& \dfrac{E_{\ell}^{\rtr} P_{i} E^{(i)} N^{d_{m}-1} E_{m}} {(\theta-\theta^{q^{i+1}})^{d_{m}}}. \nonumber
\end{eqnarray}
Note that the last equality above comes from the facts that $E^{(i)} N^{j} E_{m} = 0$ if $j \neq d_{m} - 1$, and

\[
E^{(i)} N^{d_{m}-1} E_{m} =
\begin{array}{rcccccccll}
& \multicolumn{3}{c}{\overbrace{\hspace{6em}}^{\mbox{$d_{1} + \cdots + d_{m-1}$}}}
& \overbrace{\hspace{1.5em}}^{\mbox{$d_{m}$}}
& \multicolumn{3}{c}{\overbrace{\hspace{6em}}^{\mbox{$d_{m+1} + \cdots + d_{r}$}}} & & \\
\ldelim({7}{4pt}[] & & & & E[1m]^{(i)} & & & & \rdelim){7}{4pt}[] & \rdelim\}{4}{50pt}[$d_{1} + \cdots + d_{m}$] \\
& \ \ \ & \ \ \ & \ \ \ & \vdots & \ \ \ & \ \ \ & \ \ \ & & \\
& \multicolumn{3}{c}{$\mbox{\smash{\Huge $0$}}$} & E[mm]^{(i)} & \multicolumn{3}{c}{$\mbox{\smash{\Huge $0$}}$} & & \\
& & & & & & & & & \\
& & & & & & & & & \\
& & & & $\mbox{\smash{\Huge $0$}}$ & & & & &
\end{array}
\] if $j = d_{m} - 1$. Note that we also have
\[
E_{\ell}^{\rtr} P_{i} =
\begin{array}{rcccccccll}
& & \stackrel{d_{1}th}{\downarrow} & & \stackrel{(d_{1}+d_{2})th}{\downarrow} & & & \stackrel{(d)th}{\downarrow} & & \\
\ldelim({8}{4pt}[] & & & & & & &  & \rdelim){8}{4pt}[] & \\
& \multicolumn{2}{c}{$\mbox{\smash{\Huge $0$}}$} & \cdots & \cdots & \cdots & \multicolumn{2}{c}{$\mbox{\smash{\Huge $0$}}$} & & \\
& & & & & & & & & \\
& * & y_{i}[\ell 1] & * & y_{i}[\ell 2] & \cdots & * & y_{i}[\ell r] & &
\leftarrow (\ d_{1} + \cdots + d_{\ell-1} + 1)th \\
& & & & & & & & & \\
& & & & & & & & & \\
& \multicolumn{2}{c}{$\mbox{\smash{\Huge $0$}}$} & \cdots & \cdots & \cdots & \multicolumn{2}{c}{$\mbox{\smash{\Huge $0$}}$} & & \\[10.0pt]
\end{array}
\]

By comparing with the upper most left corner of the $(\ell, m)$-th block matrix (which is the $(d_{1} + \cdots + d_{\ell-1} + 1, d_{1} + \cdots + d_{m-1} + 1)$-th entry)
of both sides of the equation (\ref{eq-NPE}), we have from (\ref{E:P_i ell m}) that

\begin{eqnarray}\label{eq_rec_l=m}
L_{i+1}^{d_{m}} y_{i+1}[\ell m] = L_{i}^{d_{m}} y_{i}[\ell m] \ \ \ (\mathrm{if} \ \ell = m)
\end{eqnarray}
and that
\begin{eqnarray}\label{eq_rec_l<m}
L_{i+1}^{d_{m}} y_{i+1}[\ell m] =
L_{i}^{d_{m}} \sum_{n=\ell}^{m-1} y_{i}[\ell n] (-1)^{m-n} \prod_{e=n}^{m-1} u_{e}^{q^{i}}
+ L_{i}^{d_{m}} y_{i}[\ell m] \ \ \  (\mathrm{if} \ \ell < m)
\end{eqnarray}
(note that $L_{i+1} = (\theta-\theta^{q^{i+1}}) L_{i}$).
By definition, we have $y_{0}[\ell m] = 1$ if $\ell = m$ and $y_{0}[\ell m] = 0$ if $\ell < m$.
We fix $\ell$ and show the equalities (\ref{eq_log_l=m}) and (\ref{eq_log_l<m}) hold
by the induction on $m$ ($\geq \ell$).

When $m = \ell$, we can show this easily by the recurrence relation (\ref{eq_rec_l=m}).
Let $m > \ell$ and assume that the equality (\ref{eq_log_l<m}) is true for $y_{i}[\ell n]$
with $\ell \leq n < m$ and $i \geq 0$.
By the recurrence relation (\ref{eq_rec_l<m}), we have

\begin{eqnarray*}
\begin{split}
L_{i+1}^{d_{m}} y_{i+1}[\ell m]
&= L_{i}^{d_{m}}
\sum_{n=\ell+1}^{m-1} (-1)^{n-\ell} \sum_{0 \leq i_{\ell} \leq \cdots \leq i_{n-1} < i}
\dfrac{u_{\ell}^{q^{i_{\ell}}} \cdots u_{n-1}^{q^{i_{n-1}}}}
{L_{i_{\ell}}^{s_{\ell}} \cdots L_{i_{n-1}}^{s_{n-1}} L_{i}^{d_{n}}}
(-1)^{m-n}\prod_{e=n}^{m-1} u_{e}^{q^{i}} \\
&\quad + L_{i}^{d_{m}} \dfrac{1}{L_{i}^{d_{\ell}}}(-1)^{m-\ell} \prod_{e=\ell}^{m-1} u_{e}^{q^{i}}
+ L_{i}^{d_{m}} y_{i}[\ell m] \\
&=
(-1)^{m-\ell} \sum_{n=\ell+1}^{m-1} \ \sum_{0 \leq i_{\ell} \leq \cdots \leq i_{n-1} < i}
\dfrac{u_{\ell}^{q^{i_{\ell}}} \cdots u_{n-1}^{q^{i_{n-1}}} u_{n}^{q^{i}} \cdots u_{m-1}^{q^{i}}}
{L_{i_{\ell}}^{s_{\ell}} \cdots L_{i_{n-1}}^{s_{n-1}} L_{i}^{s_{n}} \cdots L_{i}^{s_{m-1}}} \\
&\quad + (-1)^{m-\ell}
\dfrac{u_{\ell}^{q^{i}} \cdots u_{m-1}^{q^{i}}}{L_{i}^{s_{\ell}} \cdots L_{i}^{s_{m-1}}}
+ L_{i}^{d_{m}} y_{i}[\ell m] \\
&= (-1)^{m-\ell} \sum_{\substack{0 \leq i_{\ell} \leq \cdots \leq i_{m-1} \\ i_{m-1} = i}}
\dfrac{u_{\ell}^{q^{i_{\ell}}} \cdots u_{m-1}^{q^{i_{m-1}}}}
{L_{i_{\ell}}^{s_{\ell}} \cdots L_{i_{m-1}}^{s_{m-1}}}
+ L_{i}^{d_{m}} y_{i}[\ell m].
\end{split}
\end{eqnarray*}
Since $y_{0}[\ell m] = 0$, we have

\begin{eqnarray*}
L_{i}^{d_{m}} y_{i}[\ell m]
&=& (-1)^{m-\ell} \sum_{i' = 0}^{i-1} \ \sum_{\substack{0 \leq i_{\ell} \leq \cdots \leq i_{m-1} \\ i_{m-1} = i'}}
\dfrac{u_{\ell}^{q^{i_{\ell}}} \cdots u_{m-1}^{q^{i_{m-1}}}}
{L_{i_{\ell}}^{s_{\ell}} \cdots L_{i_{m-1}}^{s_{m-1}}} \\
&=& (-1)^{m-\ell} \sum_{0 \leq i_{\ell} \leq \cdots \leq i_{m-1} < i}
\dfrac{u_{\ell}^{q^{i_{\ell}}} \cdots u_{m-1}^{q^{i_{m-1}}}}{L_{i_{\ell}}^{s_{\ell}} \cdots L_{i_{m-1}}^{s_{m-1}}}
\end{eqnarray*}
and hence the equalities (\ref{eq_log_l=m}) and (\ref{eq_log_l<m}) are true
for each $m \geq \ell$ by induction.
\end{proof}

\subsection{$v$-adic convergence of CMPLs} \label{subsection-log-convergence}
Next, we consider when the sum $\log_{G} \bx$ converges in
$\Lie G(\CC_{v}) = \CC_{v}^{d}$ for $\bx \in G(\CC_{v}) = \CC_{v}^{d}$.
For any matrix $M = (M_{i j})$ with $M_{i j} \in \CC_{v}$,
we denote $|M|_{v} := \max \{ |M_{i j}|_{v} \}$.
Assume that $|u_m|_{v} \leq 1$ for each $1 \leq m <r$.
Then $\log_{G} \bx$ converges for each $\bx \in G(\CC_{v})$ with $|\bx|_{v} < 1$.
Indeed, it is clear that $|N|_{v} = 1$ and $|E^{(i)}|_{v} \leq 1$.
Since $v^2$ does not divide $\theta^{q^{i+1}} - \theta$ for each $i \geq 0$,
we have $1 \geq |\theta^{q^{i+1}} - \theta|_{v} \geq |v|_{v}$.
Thus by the equation (\ref{eq-P}), we obtain
\[
|P_{i+1}|_{v} \leq |P_{i}|_{v} |\theta^{q^{i+1}} - \theta|_{v}^{-(2d_{1} - 2+1)}
\leq |P_{i}|_{v} |v|_{v}^{-(2d_{1}-1)}.
\]
Since $P_{0} = I_{d}$, we have an upper bound
\[
|P_{i}|_{v} \leq |v|_{v}^{-i(2d_{1}-1)}
\]
for each $i \geq 0$.
Therefore if $|\bx|_{v} < 1$, we have
\[
|P_{i} \bx^{(i)}|_{v} \leq |P_{i}|_{v} |\bx|_{v}^{q^{i}} \leq |v|_{v}^{-i(2d_{1}-1)} |\bx|_{v}^{q^{i}} \to 0 \ \ (i \to \infty),
\]
and hence the sum converges.

Let
\begin{equation}\label{E:v_s,u}
\bv = \bv_{\fs, \bu} :=
\begin{array}{rcll}
\ldelim( {15}{4pt}[] & 0 & \rdelim) {15}{4pt}[] & \rdelim\}{4}{10pt}[$d_{1}$] \\
& \vdots & & \\
& 0 & & \\
& (-1)^{r-1} u_{1} \cdots u_{r} & & \\
& 0 & & \rdelim\}{4}{10pt}[$d_{2}$] \\
& \vdots & & \\
& 0 & & \\
& (-1)^{r-2} u_{2} \cdots u_{r} & & \\
& \vdots & & \vdots \\
& 0 & & \rdelim\}{4}{10pt}[$d_{r}$] \\
& \vdots & & \\
& 0 & & \\
& u_{r} & & \\[10pt]
\end{array} \in G(\ok).
\end{equation}
It is clear that $|\bv|_{v} < 1$ when $|u_{m}|_{v} \leq 1$ for each $1 \leq m < r$ and
$|u_{r}|_{v} < 1$.
Thus in this case, the sum $\log_{G} \bv$ converges $v$-adically.

\begin{remark}
The authors do not know  the precise $v$-adic convergence domain of $\log_{G}$.
\end{remark}

\begin{theorem}\label{eq_log_value}
Let $u_{1}, \dots, u_{r} \in \ok^{\times}$ with $|u_{m}|_{v} \leq 1$ for each $1 \leq m < r$ and $|u_{r}|_{v} < 1$.
Let $G$ and $\bv$ be as above.
Then we have
\[
\log_{G} \bv =
\begin{array}{rcll}
\ldelim( {15}{4pt}[] & * & \rdelim) {15}{4pt}[] & \rdelim\}{4}{10pt}[$d_{1}$] \\
& \vdots & & \\
& * & & \\
& (-1)^{r-1}\Li_{(s_{r}, \dots, s_{1})}^{\star}(u_{r}, \dots, u_{1})_{v} & & \\
& * & & \rdelim\}{4}{10pt}[$d_{2}$] \\
& \vdots & & \\
& * & & \\
& (-1)^{r-2}\Li_{(s_{r}, \dots, s_{2})}^{\star}(u_{r}, \dots, u_{2})_{v} & & \\
& \vdots & & \vdots \\
& * & & \rdelim\}{4}{10pt}[$d_{r}$] \\
& \vdots & & \\
& * & & \\
& \Li^{\star}_{s_{r}}(u_{r})_{v} & & \\[10pt]
\end{array} \ \ \ \in \Lie G(\CC_{v}).
\]
\end{theorem}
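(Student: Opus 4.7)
The plan is to read off $\log_G \bv = \sum_{i \geq 0} P_i \bv^{(i)}$ coordinate-by-coordinate at the rows $d_1 + \cdots + d_\ell$ (the last row of block $\ell$) for $\ell = 1, \ldots, r$, and to identify the resulting series with the advertised CMSPLs. Since $\bv$ has nonzero entries only at the bottom positions $d_1 + \cdots + d_m$ of each block $m$, namely $(-1)^{r-m} u_m \cdots u_r$, the matrix-vector product picks out exactly the bottom-right corner entry $y_i[\ell m] := P_i[\ell m]_{d_\ell d_m}$ of every block matrix. This is precisely the quantity computed in Proposition~\ref{prop_log_coeff}, so no information beyond that proposition is needed, and the vanishing $P_i[\ell m] = 0$ for $\ell > m$ restricts the relevant $m$ to $\ell \leq m \leq r$. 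The $v$-adic convergence has already been established in \S\S\ref{subsection-log-convergence}, so we may freely rearrange the sum.

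Next I would substitute the closed forms (\ref{eq_log_l=m}) and (\ref{eq_log_l<m}) and perform the sign/exponent bookkeeping. The sign $(-1)^{m-\ell}$ from $y_i[\ell m]$ multiplies $(-1)^{r-m}$ from the $m$th entry of $\bv^{(i)}$, producing the uniform sign $(-1)^{r-\ell}$ that can be pulled outside the whole sum. Using $d_m = s_m + s_{m+1} + \cdots + s_r$, the factor $L_i^{-d_m}$ splits as $L_i^{-s_m} \cdots L_i^{-s_r}$, and the $u_m^{q^i} \cdots u_r^{q^i}$ factor from $\bv^{(i)}$ slots naturally next to $L_i^{-s_m} \cdots L_i^{-s_r}$. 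The $(d_1 + \cdots + d_\ell)$-th coordinate therefore becomes
\[
(-1)^{r-\ell} \sum_{i \geq 0}\Biggl[ \frac{u_\ell^{q^i} \cdots u_r^{q^i}}{L_i^{s_\ell} \cdots L_i^{s_r}} + \sum_{m=\ell+1}^{r} \sum_{0 \leq i_\ell \leq \cdots \leq i_{m-1} < i} \frac{u_\ell^{q^{i_\ell}} \cdots u_{m-1}^{q^{i_{m-1}}} u_m^{q^i} \cdots u_r^{q^i}}{L_{i_\ell}^{s_\ell} \cdots L_{i_{m-1}}^{s_{m-1}} L_i^{s_m} \cdots L_i^{s_r}}\Biggr].
\]

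The key combinatorial step is to recognize the bracketed expression, after setting $i_m = i_{m+1} = \cdots = i_r = i$ in the $m$th inner sum, as exactly the full CMSPL series for $\Li^{\star}_{(s_r, \ldots, s_\ell)}(u_r, \ldots, u_\ell)_v$. More precisely, one partitions the summation domain $\{0 \leq i_\ell \leq i_{\ell+1} \leq \cdots \leq i_r\}$ of the CMSPL by letting $m$ be the smallest index with $i_m = i_r$: then $i_\ell \leq \cdots \leq i_{m-1} < i_m = \cdots = i_r =: i$ (with the convention that $m = \ell$ corresponds to the first term, where all indices collapse to $i$). This bijection shows the bracketed sum equals $\Li^{\star}_{(s_r,\ldots,s_\ell)}(u_r,\ldots,u_\ell)_v$ after reversing the order of indices to match the CMSPL convention, giving the stated value $(-1)^{r-\ell}\Li^{\star}_{(s_r,\ldots,s_\ell)}(u_r,\ldots,u_\ell)_v$.

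The proof is essentially a bookkeeping exercise once Proposition~\ref{prop_log_coeff} is in hand, so there is no serious obstacle. The only point requiring care is the reindexing argument in the last paragraph: one must verify that the partition by the index $m$ is a bijection and that the signs $(-1)^{m-\ell} \cdot (-1)^{r-m} = (-1)^{r-\ell}$ combine uniformly so that every term of the CMSPL is produced exactly once with the correct sign. The other asterisked coordinates of $\log_G \bv$ are not computed here because they are irrelevant for the applications in the rest of the paper.
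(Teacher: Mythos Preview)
Your proposal is correct and follows essentially the same argument as the paper: both compute the $(d_1+\cdots+d_\ell)$-th coordinate of $\log_G\bv$ by combining Proposition~\ref{prop_log_coeff} with the block structure of $\bv$, and then reassemble the resulting sums into the CMSPL via the same reindexing. Your explicit description of the partition by ``the smallest $m$ with $i_m=i_r$'' is precisely the combinatorial step the paper carries out in its chain of equalities, so the two proofs coincide.
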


\begin{proof}
Let $1 \leq \ell \leq r$.
By Proposition $\ref{prop_log_coeff}$ and the definition of $\bv$,
the last coordinate of the $\ell$-th block ($=$ the $(d_{1} + \cdots + d_{\ell})$-th component)
of the vector $\log_{G} \bv$ is
\begin{eqnarray*}
\begin{split}
& \sum_{i \geq 0} \sum_{m=\ell}^{r} y_{i}[\ell m] (-1)^{r-m}u_{m}^{q^{i}} \cdots u_{r}^{q^{i}} \\
&= \sum_{i \geq 0} \left( \dfrac{1}{L_{i}^{d_{\ell}}} (-1)^{r-\ell} u_{\ell}^{q^{i}} \cdots u_{r}^{q^{i}}
\right. \\
&\quad + \left. \sum_{m=\ell+1}^{r} (-1)^{m-\ell} \sum_{0 \leq i_{\ell} \leq \cdots \leq i_{m-1} < i}
\dfrac{u_{\ell}^{q^{i_{\ell}}} \cdots u_{m-1}^{q^{i_{m-1}}}}
{L_{i_{\ell}}^{s_{\ell}} \cdots L_{i_{m-1}}^{s_{m-1}} L_{i}^{d_{m}}}
(-1)^{r-m}u_{m}^{q^{i}} \cdots u_{r}^{q^{i}} \right) \\
&= (-1)^{r-\ell} \sum_{i \geq 0} \left(
\dfrac{u_{\ell}^{q^{i}} \cdots u_{r}^{q^{i}}}{L_{i}^{s_{\ell}} \cdots L_{i}^{s_{r}}}
+ \sum_{m=\ell+1}^{r} \ \sum_{0 \leq i_{\ell} \leq \cdots \leq i_{m-1} < i}
\dfrac{u_{\ell}^{q^{i_{\ell}}} \cdots u_{m-1}^{q^{i_{m-1}}} u_{m}^{q^{i}} \cdots u_{r}^{q^{i}}}
{L_{i_{\ell}}^{s_{\ell}} \cdots L_{i_{m-1}}^{s_{m-1}} L_{i}^{s_{m}} \cdots L_{i}^{s_{r}}}
\right) \\
&= (-1)^{r-\ell} \sum_{0 \leq i_{\ell} \leq \cdots \leq i_{r}}
\dfrac{u_{\ell}^{q^{i_{\ell}}} \cdots u_{r}^{q^{i_{r}}}}
{L_{i_{\ell}}^{s_{\ell}} \cdots L_{i_{r}}^{s_{r}}} \\
&= (-1)^{r-\ell} \Li_{(s_{r}, \dots, s_{\ell})}^{\star}(u_{r}, \dots, u_{\ell})_{v},
\end{split}
\end{eqnarray*}
where the identities hold $v$-adically.
\end{proof}

\begin{remark}
Note that one has the same identity in the theorem above in the $\infty$-adic setting once we put suitable restrictions on the $\infty$-adic valuations of $u_i$ for which the series in question are defined.
\end{remark}

\begin{remark}\label{Rem:G and v}
The $t$-module $G$ and algebraic point $\bv$ defined above are exactly identified with $\Ext_{\cF}^{1}\left({\bf{1}},M' \right)$ and $M$ respectively in \cite[Thm.~4.3.2]{CPY14} through \cite[Thms.~5.2.1, 5.2.3]{CPY14}.
\end{remark}

\begin{remark}
If we replace $u_{1},\ldots,u_{r}$ by $r$ independent variables $t_{1},\ldots,t_{r}$, then the $t$-module $G$ is defined over $A[t_{1},\ldots,t_{r}]$ and the formula of $\log_{G}{\bf{v}}$ in Theorem~\ref{eq_log_value} is still valid as power series in the variables $t_{1},\ldots,t_{r}$ (when ignoring convergence).

\end{remark}

\section{Analytic continuation}\label{Sec:Analytic continuation}
To simplify the notation, we denote by $[-]$ the $\FF_{q}[t]$-actions on any $t$-modules.
For a monic irreducible polynomial $v \in A$, put $v(t):=v|_{\theta=t}\in \FF_{q}[t]$.
Let $\mathcal{O}_{\CC_{v}} := \{ \alpha \in \CC_{v} ; |\alpha|_{v} \leq 1 \}$ and $\fm_{v}$ be the maximal ideal of $\mathcal{O}_{\CC_{v}}$.
In $\S$ \ref{subsection-CMPLs-CMSPLs}, we see that $\Li_{\fs}$ and $\Li_{\fs}^{\star}$ converge $v$-adically on the domain
\[
\{ (z_{1}, \dots, z_{r}) | z_{1} \in \fm_{v} \ \mathrm{and} \ z_{i} \in \mathcal{O}_{\CC_{v}} \ \mathrm{for \ each} \ 2 \leq i \leq r \}.
\]
The purpose of this section is to study analytic continuation for $\Li_{\fs}$ and $\Li_{\fs}^{\star}$. Precisely, the $v$-adic convergence domain of them can be extended to $\mathcal{O}_{\CC_{v}}^{r}$.

\subsection{Analytic continuation of CMSPLs}

\begin{proposition}\label{proposition-log-converge}
Let $u_{1}, \dots, u_{r} \in \ok^{\times}$ with $|u_{i}|_{v} \leq 1$ (i.e., $u_{i} \in \mathcal{O}_{\CC_{v}}$) for each $1 \leq i \leq r$. Let $G$ be the $t$-module defined in (\ref{E:Explicit t-moduleCMPL}) and $\bv\in G(\ok)$ be defined in (\ref{E:v_s,u}). Let $\ell \geq 1$ be an integer such that each image of $u_{i}$ in $\mathcal{O}_{\CC_{v}} / \fm_{v} \cong \overline{\FF_{q}}$ is contained in $\FF_{q^{\ell}}$.
Then
\[ \log_{G} \left([v(t)^{d_{1} \ell} - 1] [v(t)^{d_{2} \ell} - 1] \cdots [v(t)^{d_{r} \ell} - 1] (\bv)\right) \]
converges in $\Lie G(\CC_{v})$.
\end{proposition}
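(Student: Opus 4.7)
The plan is to show that $\mathbf{w} := [v(t)^{d_{1}\ell} - 1]\cdots[v(t)^{d_{r}\ell} - 1](\bv) \in G(\ok)$ satisfies $|\mathbf{w}|_{v} < 1$; once this is established, the bound $|P_{i}|_{v} \leq |v|_{v}^{-i(2d_{1}-1)}$ from Subsection~\ref{subsection-log-convergence} immediately forces $\log_{G}(\mathbf{w})$ to converge in $\Lie G(\CC_{v})$. Since all entries of $\rho_{t}$ and of $\bv$ lie in $\mathcal{O}_{\CC_{v}}$, I will reduce modulo $\fm_{v}$ to obtain a $t$-module $\bar{G}$ over $\overline{\FF_{q}}$ together with a point $\bar{\bv} \in \bar{G}(\overline{\FF_{q}})$, and it suffices to show that the reduction $\bar{\mathbf{w}}$ vanishes.

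The block-upper-triangular shape of $\rho_{t}$ provides a filtration by sub-$t$-modules
\[
0 = G^{(0)} \subsetneq G^{(1)} \subsetneq \cdots \subsetneq G^{(r)} = G, \qquad G^{(i)} := \{\, \mathbf{x} \in G : x_{j} = 0 \text{ for all } j > i \,\},
\]
whose successive quotients $G^{(i)}/G^{(i-1)}$ are canonically isomorphic to $\bC^{\otimes d_{i}}$, and the image of $\bv$ in the $i$-th subquotient is $(0, \ldots, 0, (-1)^{r-i} u_{i} \cdots u_{r})^{\tr}$. Setting $L := \mathrm{lcm}(\ell, \deg v)$, the reductions $\bar{u}_{j}$ and $\bar{\theta}$ all lie in $\FF_{q^{L}}$, so $\bar{G}(\FF_{q^{L}})$ is stable under every $\bar{\rho}_{a}$ and contains each of the relevant images; the vanishing argument will be carried out inside this finite $\FF_{q}[t]$-module.

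The hard part is the following key lemma, which I plan to extract from the $A$-motive of $\bC^{\otimes n}$: on the mod-$v$ reduction one has the matrix identity $\bar{\rho}_{v(t)^{n}} = \tau^{\deg v} \cdot I_{n}$ in $\Mat_{n}(\overline{\FF_{q}}[\tau])$. The input is that the Anderson $A$-motive of $\bC^{\otimes n}$ is free of rank one over $\ok[t]$, generated by some $e$ with $\sigma e = (t-\theta)^{n} e$, so iterating modulo $v$ produces
\[
\sigma^{\deg v}\bar{e} = \prod_{j=0}^{\deg v - 1}(t - \bar{\theta}^{q^{j}})^{n}\bar{e} = v(t)^{n}\bar{e},
\]
and Anderson duality then translates this into the claimed identity on the $t$-module side. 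Raising to the $\ell$-th power yields $\bar{\rho}_{v(t)^{n\ell}} = \tau^{\ell \deg v} I_{n}$, which acts as the identity on $\bar{\bC}^{\otimes n}(\FF_{q^{\ell \deg v}}) \supseteq \bar{\bC}^{\otimes n}(\FF_{q^{L}})$; equivalently, $[v(t)^{n\ell} - 1]$ annihilates $\bar{\bC}^{\otimes n}(\FF_{q^{L}})$.

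To conclude, the $r$ operators $[v(t)^{d_{m}\ell} - 1]$ all commute, being polynomials in the single operator $\rho_{v(t)}$. The image of $\bar{\mathbf{w}}$ in $\bar{G}^{(i)}/\bar{G}^{(i-1)} \cong \bar{\bC}^{\otimes d_{i}}$ is obtained by first applying $\prod_{m \neq i}[v(t)^{d_{m}\ell} - 1]$ to the image of $\bar{\bv}$, which remains in $\bar{\bC}^{\otimes d_{i}}(\FF_{q^{L}})$ by stability, and then hitting the result with $[v(t)^{d_{i}\ell} - 1]$, which kills it by the key lemma applied with $n = d_{i}$. Hence $\bar{\mathbf{w}}$ has zero image in every subquotient of the filtration, so $\bar{\mathbf{w}} = 0$, yielding $|\mathbf{w}|_{v} < 1$ and the desired convergence of $\log_{G}(\mathbf{w})$.
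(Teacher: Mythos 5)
Your overall route is the same as the paper's (reduce $\rho_t$ and $\bv$ modulo $\fm_{v}$, use the filtration by the sub-$t$-modules $G^{(i)}$ — the paper's $G_{i}$ — with subquotients $\bC^{\otimes d_{i}}$, invoke the Anderson--Thakur identity $[v(t)^{n}]=\tau^{\deg v}$ on the reduction, which the paper simply cites as \cite[Prop.~1.6.1]{AT90} instead of re-deriving it from the $t$-motive, and finish with the bound $|P_{i}|_{v}\leq|v|_{v}^{-i(2d_{1}-1)}$ of \S\S 3.3). However, your final descent step has a genuine gap: for $i<r$ the projection of $G$ onto its $i$-th block of coordinates is \emph{not} a morphism of $t$-modules, because the off-diagonal entries $E[\ell m]\tau$ ($\ell<m$) of $\rho_{t}$ feed the blocks of index $>i$ into block $i$. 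Concretely, for $r=2$, $s_{1}=s_{2}=1$, one has $\bv=(0,-u_{1}u_{2},u_{2})^{\tr}$, and the first block of $\rho_{t}(\bv)$ is $(-u_{1}u_{2},\,-\theta u_{1}u_{2}-u_{1}u_{2}^{q})^{\tr}$, whereas $C^{\otimes 2}_{t}$ applied to the first block $(0,-u_{1}u_{2})^{\tr}$ of $\bv$ gives $(-u_{1}u_{2},\,-\theta u_{1}u_{2})^{\tr}$; they differ by $(0,-u_{1}u_{2}^{q})^{\tr}$. So your assertion that the image of $\overline{\mathbf{w}}$ in $\overline{G}^{(i)}/\overline{G}^{(i-1)}\cong\overline{\bC}^{\otimes d_{i}}$ can be computed by applying $\prod_{m\neq i}[v(t)^{d_{m}\ell}-1]$ to the $i$-th block of $\overline{\bv}$ \emph{inside} $\overline{\bC}^{\otimes d_{i}}$ is unjustified (and generally false); moreover that image is not even defined until you know $\overline{\mathbf{w}}\in\overline{G}^{(i)}$. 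Only the projection onto the last block, i.e.\ the quotient map $G\to G/G^{(r-1)}\cong\bC^{\otimes d_{r}}$, is equivariant on all of $G$.

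The repair is exactly the paper's downward induction, and it fits your commuting-operators framing: show by descending induction on $i$ that $\overline{\bv}_{i}:=\prod_{m>i}[v(t)^{d_{m}\ell}-1](\overline{\bv})$ lies in $\overline{G}^{(i)}(\FF_{q^{L}})$. At each step one uses that the map $\overline{G}^{(i+1)}\to\overline{G}^{(i+1)}/\overline{G}^{(i)}\cong\overline{\bC}^{\otimes d_{i+1}}$ \emph{is} a $t$-module morphism, that $\overline{\bv}_{i+1}$ has coordinates in $\FF_{q^{L}}\subseteq\FF_{q^{\ell\deg v}}$ (your $L=\mathrm{lcm}(\ell,\deg v)$ observation), and your key lemma with $n=d_{i+1}$, so that $[v(t)^{d_{i+1}\ell}-1](\overline{\bv}_{i+1})$ maps to $0$ in the quotient and hence lies in $\overline{G}^{(i)}$. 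The case $i=0$ gives $\overline{\mathbf{w}}=\overline{\bv}_{0}=0$, i.e.\ $|\mathbf{w}|_{v}<1$, and then the convergence follows as you say. With this inductive ordering restored, your argument coincides with the paper's proof.
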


\begin{proof}
For $1 \leq i \leq r$, let $G_{i}$ be the $t$-module whose $t$-action is defined by
the square matrix of size $d_{1} + \cdots + d_{i}$ cut from the upper left square of
the matrix defined in (\ref{E:Explicit t-moduleCMPL}), and note that it is defined over $\ok\cap \mathcal{O}_{\CC_{v}}$. We also set $G_{0} := 0$.
Thus we have the exact sequence
\[
\xymatrix{
0 \ar[r] & G_{i-1} \ar[r] & G_{i} \ar[r]^{\pi_{i}} & \bC^{\otimes d_{i}} \ar[r] & 0
}
\]
for each $1 \leq i \leq r$. Note that $G_{r}=G$.
We denote by $\overline{\pi}_{i}$ the induced map on the $\overline{\FF_{q}}$-valued points.

Denote by $\overline{\bv}_{r}$ the image of $\bv$ in $G_{r}(\overline{\FF_{q}})$.
Since by \cite[Prop.~1.6.1]{AT90} we have $[v(t)^{d_{r}}] = \tau^{\deg v}$ on $\bC^{\otimes d_{r}}(\overline{\FF_{q}})$,
it follows that $\overline{\pi}_{r} ([v(t)^{d_{r} \ell} - 1] (\overline{\bv}_{r})) = [v(t)^{d_{r} \ell} - 1] (\overline{\pi}_{r} (\overline{\bv}_{r})) = 0$.
Thus
\[
\overline{\bv}_{r-1} := [v(t)^{d_{r} \ell} - 1] (\overline{\bv}_{r}) \in \Ker \overline{\pi}_{r} = G_{r-1}(\overline{\FF_{q}}).
\]
Similarly, we have
\begin{eqnarray*}
\overline{\bv}_{r-2} &:=& [v(t)^{d_{r-1} \ell} - 1] (\overline{\bv}_{r-1}) \in \Ker \overline{\pi}_{r-1} = G_{r-2}(\overline{\FF_{q}}) \\
&\vdots& \\
\overline{\bv}_{1} &:=& [v(t)^{d_{2} \ell} - 1] (\overline{\bv}_{2}) \in \Ker \overline{\pi}_{2} = G_{1}(\overline{\FF_{q}}) \\
\overline{\bv}_{0} &:=& [v(t)^{d_{1} \ell} - 1] (\overline{\bv}_{1}) \in \Ker \overline{\pi}_{1} = G_{0}(\overline{\FF_{q}}). \\
\end{eqnarray*}
It follows that $\overline{\bv}_{0} = [v(t)^{d_{1} \ell} - 1] [v(t)^{d_{2} \ell} - 1] \cdots [v(t)^{d_{r} \ell} - 1] (\overline{\bv}_{r}) = 0$ in $G(\overline{\FF}_{q})$ and so $[v(t)^{d_{1} \ell} - 1] [v(t)^{d_{2} \ell} - 1] \cdots [v(t)^{d_{r} \ell} - 1] (\bv) \in G(\fm_{v})$, at which $\log_{G}$ converges $v$-adically (see $\S$ \ref{subsection-log-convergence}).
\end{proof}

\begin{definition}
Let $(u_{1}, \dots, u_{r}) \in \mathcal{O}_{\CC_{v}}^{r}$ and let $a \in \FF_{q}[t]$ be nonzero such that $\log_{G}\left([a] (\bv)\right)$ converges in $\Lie G(\CC_{v})$. We define $\Li_{(s_{r}, \dots, s_{1})}^{\star}(u_{r}, \dots, u_{1})_{v}$ to be the value
\[
\dfrac{(-1)^{r-1}}{a(\theta)} \times {\textit the \ } d_{1} {\textit -th \ coordinate \ of \ }
\log_{G}\left([a] (\bv)\right).
\]
\end{definition}

\begin{remark} \label{remark-analytic-continuation-CMSPLs}
(1)
By Theorem \ref{eq_log_value}, it is clear that the definition above is independent of the choices of $a$ and $\Li_{(s_{r}, \dots, s_{1})}^{\star}(u_{r}, \dots, u_{1})_{v}$ coincides with the definition in (\ref{E:star-CMPL}) when $|u_{r}|_{v} < 1$.

(2)
By Proposition \ref{proposition-log-converge}, $\Li_{(s_{r}, \dots, s_{1})}^{\star}(u_{r}, \dots, u_{1})_{v}$ is
\[
\dfrac{(-1)^{r-1}}{(v^{d_{1} \ell}-1) \cdots (v^{d_{r} \ell}-1)} \times {\textit the \ } d_{1} {\textit -th \ coordinate \ of \ }
\log_{G}\left([v(t)^{d_{1} \ell}-1] \cdots [v(t)^{d_{r} \ell}-1] (\bv)\right).
\]

(3) Given $(u_{1}, \dots, u_{r}) \in \mathcal{O}_{\CC_{v}}^{r}$ and nonzero $a \in \FF_{q}[t]$ such that $\log_{G}\left([a] (\bv)\right)$ converges in $\Lie G(\CC_{v})$, for each $1\leq \ell \leq r$ the ($d_{1} + d_{2} + \cdots + d_{\ell}$)-th component of $\log_{G}\left([a] (\bv)\right)$ is
\[ (-1)^{r-\ell} a(\theta) \Li_{(s_{r}, \dots, s_{\ell})}^{\star}(u_{r}, \dots, u_{\ell})_{v}.\]
\end{remark}

\subsection{Analytic continuation of CMPLs}

\begin{lemma}\label{lem-CMPL-CMSPL} For any $s_{1},\ldots,s_{r}\in \NN$, we have the following identity as power series.
\begin{eqnarray*}
& & \Li^{\star}_{(s_{r}, \ldots, s_{1})}(z_{r},\ldots,z_{1}) \\
&=& \sum_{\ell=2}^{r} (-1)^{\ell} \Li_{(s_{1}, \ldots, s_{\ell-1})}(z_{1}, \ldots, z_{\ell-1}) \Li^{\star}_{(s_{r}, \ldots, s_{\ell})}(z_{r}, \ldots, z_{\ell})
+ (-1)^{r+1} \Li_{(s_{1}, \ldots, s_{r})}(z_{1}, \ldots, z_{r})
\end{eqnarray*}
\end{lemma}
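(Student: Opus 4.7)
The strategy is to reparametrize both sides by a single tuple of dummy indices and then reduce the claim to a polynomial identity in indicator variables.

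For each slot $m \in \{1,\ldots,r\}$ introduce a non-negative integer $j_m$ and set $f(j_1,\ldots,j_r) := \prod_{m=1}^{r} z_m^{q^{j_m}}/L_{j_m}^{s_m}$. Reading off the series definitions, and reversing the summation variable of each starred factor so that $z_m$ is always indexed by $j_m$, one obtains
\[
\Li^{\star}_{(s_r,\ldots,s_1)}(z_r,\ldots,z_1) \;=\; \sum_{0 \leq j_1 \leq j_2 \leq \cdots \leq j_r} f(j_1,\ldots,j_r),
\]
and likewise each factor of the right-hand side becomes a sum of the corresponding restriction of $f$: the factor $\Li_{(s_1,\ldots,s_{\ell-1})}(z_1,\ldots,z_{\ell-1})$ ranges over $j_1 > j_2 > \cdots > j_{\ell-1} \geq 0$ (using only slots $1,\ldots,\ell-1$); the factor $\Li^{\star}_{(s_r,\ldots,s_\ell)}(z_r,\ldots,z_\ell)$ ranges over $0 \leq j_\ell \leq j_{\ell+1} \leq \cdots \leq j_r$ (using only slots $\ell,\ldots,r$); and $\Li_{(s_1,\ldots,s_r)}(z_1,\ldots,z_r)$ ranges over $j_1 > j_2 > \cdots > j_r \geq 0$. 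In each product on the right, the ``left'' group of indices $j_1,\ldots,j_{\ell-1}$ is independent of the ``right'' group $j_\ell,\ldots,j_r$.

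Comparing the coefficients of $f(j_1,\ldots,j_r)$ on both sides reduces the lemma to an indicator identity: for every $(j_1,\ldots,j_r) \in \ZZ_{\geq 0}^{\,r}$,
\[
\mathbf{1}[j_1 \leq \cdots \leq j_r] \;=\; \sum_{\ell=2}^{r}(-1)^{\ell}\,\mathbf{1}[j_1 > \cdots > j_{\ell-1}]\,\mathbf{1}[j_\ell \leq \cdots \leq j_r] \;+\; (-1)^{r+1}\,\mathbf{1}[j_1 > \cdots > j_r].
\]
Setting $d_k := \mathbf{1}[j_k > j_{k+1}]$ and $e_k := 1 - d_k = \mathbf{1}[j_k \leq j_{k+1}]$ for $1 \leq k \leq r-1$, each indicator factors as a product of the $d_k$'s and $e_k$'s, and the assertion becomes the polynomial identity
\[
\prod_{k=1}^{r-1} e_k \;=\; \sum_{\ell=2}^{r}(-1)^{\ell}\Bigl(\prod_{k=1}^{\ell-2} d_k\Bigr)\Bigl(\prod_{k=\ell}^{r-1} e_k\Bigr) \;+\; (-1)^{r+1}\prod_{k=1}^{r-1} d_k
\]
in the variables $d_k$, subject to $e_k = 1 - d_k$.

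I would verify this last identity by an elementary telescoping: write $e_1 e_2 \cdots e_{r-1} = (1-d_1)e_2 \cdots e_{r-1} = e_2\cdots e_{r-1} - d_1 e_2 \cdots e_{r-1}$; then replace $d_1 e_2$ by $d_1(1-d_2) = d_1 - d_1 d_2$ and redistribute; iterate, each time peeling the leading $e_k$ out of the remainder term. After $r-1$ steps one obtains exactly the claimed sum, with the final remainder contributing the pure $d$-term $(-1)^{r-1} d_1 \cdots d_{r-1} = (-1)^{r+1}\prod_{k=1}^{r-1} d_k$. There is no substantive obstacle here; the only piece of bookkeeping worth flagging is the reversed-order convention in $\Li^{\star}_{(s_r,\ldots,s_1)}(z_r,\ldots,z_1)$, which trades the weakly decreasing chain on the summation variables for a weakly increasing chain on the slot indices $j_m$, and is what makes the identity mix ascending and descending conditions in a non-trivial way.
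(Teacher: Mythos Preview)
Your proof is correct and is essentially the same argument as the paper's: both iteratively apply the dichotomy $\mathbf{1}[j_k \leq j_{k+1}] = 1 - \mathbf{1}[j_k > j_{k+1}]$ to peel off one index at a time, producing the alternating sum by telescoping. The only difference is packaging---you first extract the coefficient of a generic monomial and reduce to an indicator/polynomial identity in the $d_k,e_k$, whereas the paper performs the identical splitting directly on the series.
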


\begin{proof}
\begin{eqnarray*}
& & {\rm LHS} \ = \ \sum_{i_{1}} \frac{ z_{1}^{ q^{ i_{1} } } }{L_{ i_{1} }^{ s_{1} }}
\sum_{i_{r} \geq \cdots \geq i_{2}} \frac{z_{r}^{q^{i_{r}}}\cdots z_{2}^{q^{i_{2}}}}{L_{i_{r}}^{s_{r}}\cdots L_{i_{2}}^{s_{2}}}
- \sum_{\substack{ i_{r} \geq \cdots \geq i_{2} \\ i_{1} > i_{2} }} \frac{z_{r}^{q^{i_{r}}}\cdots z_{1}^{q^{i_{1}}}} {L_{i_{r}}^{s_{r}}\cdots L_{i_{1}}^{s_{1}}} \\
%%%%%%%%%%
&=& \sum_{i_{1}} \frac{ z_{1}^{ q^{ i_{1} } } }{L_{ i_{1} }^{ s_{1} }}
\sum_{i_{r} \geq \cdots \geq i_{2}} \frac{z_{r}^{q^{i_{r}}}\cdots z_{2}^{q^{i_{2}}}}{L_{i_{r}}^{s_{r}}\cdots L_{i_{2}}^{s_{2}}}
- \sum_{i_{1} > i_{2}} \frac{z_{1}^{q^{i_{1}}} z_{2}^{q^{i_{2}}}} {L_{i_{1}}^{s_{1}} L_{i_{2}}^{s_{2}}}
\sum_{i_{r} \geq \cdots \geq i_{3}} \frac{z_{r}^{q^{i_{r}}}\cdots z_{3}^{q^{i_{3}}}} {L_{i_{r}}^{s_{r}}\cdots L_{i_{3}}^{s_{3}}}
+ \sum_{\substack{ i_{r} \geq \cdots \geq i_{3} \\ i_{1} > i_{2} > i_{3}}} \frac{z_{r}^{q^{i_{r}}}\cdots z_{1}^{q^{i_{1}}}} {L_{i_{r}}^{s_{r}}\cdots L_{i_{1}}^{s_{1}}} \\
%%%%%%%%%%
&=& \sum_{i_{1}} \frac{ z_{1}^{ q^{ i_{1} } } }{L_{ i_{1} }^{ s_{1} }}
\sum_{i_{r} \geq \cdots \geq i_{2}} \frac{z_{r}^{q^{i_{r}}}\cdots z_{2}^{q^{i_{2}}}}{L_{i_{r}}^{s_{r}}\cdots L_{i_{2}}^{s_{2}}}
- \sum_{i_{1} > i_{2}} \frac{z_{1}^{q^{i_{1}}} z_{2}^{q^{i_{2}}}} {L_{i_{1}}^{s_{1}} L_{i_{2}}^{s_{2}}}
\sum_{i_{r} \geq \cdots \geq i_{3}} \frac{z_{r}^{q^{i_{r}}}\cdots z_{3}^{q^{i_{3}}}} {L_{i_{r}}^{s_{r}}\cdots L_{i_{3}}^{s_{3}}} \\
& & +  \sum_{i_{1} > i_{2} > i_{3}} \frac{z_{1}^{q^{i_{1}}} z_{2}^{q^{i_{2}}} z_{3}^{q^{i_{3}}}} {L_{i_{1}}^{s_{1}} L_{i_{2}}^{s_{2}} L_{i_{3}}^{s_{3}}}
\sum_{i_{r} \geq \cdots \geq i_{4}} \frac{z_{r}^{q^{i_{r}}} \cdots z_{4}^{q^{i_{4}}}} {L_{i_{r}}^{s_{r}}\cdots L_{i_{4}}^{s_{4}}}
- \sum_{\substack{ i_{r} \geq \cdots \geq i_{4} \\ i_{1} > i_{2} > i_{3} > i_{4}}} \frac{z_{r}^{q^{i_{r}}}\cdots z_{1}^{q^{i_{1}}}} {L_{i_{r}}^{s_{r}}\cdots L_{i_{1}}^{s_{1}}} \\
%%%%%%%%%%
& \vdots & \\
%%%%%%%%%%
&=& \sum_{i_{1}} \frac{ z_{1}^{ q^{ i_{1} } } }{L_{ i_{1} }^{ s_{1} }}
\sum_{i_{r} \geq \cdots \geq i_{2}} \frac{z_{r}^{q^{i_{r}}}\cdots z_{2}^{q^{i_{2}}}}{L_{i_{r}}^{s_{r}}\cdots L_{i_{2}}^{s_{2}}}
- \sum_{i_{1} > i_{2}} \frac{z_{1}^{q^{i_{1}}} z_{2}^{q^{i_{2}}}} {L_{i_{1}}^{s_{1}} L_{i_{2}}^{s_{2}}}
\sum_{i_{r} \geq \cdots \geq i_{3}} \frac{z_{r}^{q^{i_{r}}}\cdots z_{3}^{q^{i_{3}}}} {L_{i_{r}}^{s_{r}}\cdots L_{i_{3}}^{s_{3}}} + \cdots \\
& & \cdots +
(-1)^{r-1} \sum_{i_{1} > \cdots > i_{r-2}} \frac{z_{1}^{q^{i_{1}}} \cdots z_{r-2}^{q^{i_{r-2}}}} {L_{i_{1}}^{s_{1}} \cdots L_{i_{r-2}}^{s_{r-2}}}
\sum_{i_{r} \geq i_{r-1}} \frac{z_{r}^{q^{i_{r}}} z_{r-1}^{q^{i_{r-1}}}} {L_{i_{r}}^{s_{r}} L_{i_{r-1}}^{s_{r-1}}}
+ (-1)^{r} \sum_{\substack{ i_{r} \geq i_{r-1} \\ i_{1} > \cdots > i_{r-1}}} \frac{z_{r}^{q^{i_{r}}}\cdots z_{1}^{q^{i_{1}}}} {L_{i_{r}}^{s_{r}}\cdots L_{i_{1}}^{s_{1}}} \\
%%%%%%%%%%
&=& \sum_{i_{1}} \frac{ z_{1}^{ q^{ i_{1} } } }{L_{ i_{1} }^{ s_{1} }}
\sum_{i_{r} \geq \cdots \geq i_{2}} \frac{z_{r}^{q^{i_{r}}}\cdots z_{2}^{q^{i_{2}}}}{L_{i_{r}}^{s_{r}}\cdots L_{i_{2}}^{s_{2}}}
- \sum_{i_{1} > i_{2}} \frac{z_{1}^{q^{i_{1}}} z_{2}^{q^{i_{2}}}} {L_{i_{1}}^{s_{1}} L_{i_{2}}^{s_{2}}}
\sum_{i_{r} \geq \cdots \geq i_{3}} \frac{z_{r}^{q^{i_{r}}}\cdots z_{3}^{q^{i_{3}}}} {L_{i_{r}}^{s_{r}}\cdots L_{i_{3}}^{s_{3}}} + \cdots \\
& & \cdots +
(-1)^{r-1} \sum_{i_{1} > \cdots > i_{r-2}} \frac{z_{1}^{q^{i_{1}}} \cdots z_{r-2}^{q^{i_{r-2}}}} {L_{i_{1}}^{s_{1}} \cdots L_{i_{r-2}}^{s_{r-2}}}
\sum_{i_{r} \geq i_{r-1}} \frac{z_{r}^{q^{i_{r}}} z_{r-1}^{q^{i_{r-1}}}} {L_{i_{r}}^{s_{r}} L_{i_{r-1}}^{s_{r-1}}} \\
& & + (-1)^{r} \sum_{i_{1} > \cdots > i_{r-1}} \frac{z_{1}^{q^{i_{1}}} \cdots z_{r-1}^{q^{i_{r-1}}}} {L_{i_{1}}^{s_{1}} \cdots L_{i_{r-1}}^{s_{r-1}}}
\sum_{i_{r}} \frac{z_{r}^{q^{i_{r}}}} {L_{i_{r}}^{s_{r}}}
+ (-1)^{r+1} \sum_{i_{1} > \cdots > i_{r}} \frac{z_{r}^{q^{i_{r}}}\cdots z_{1}^{q^{i_{1}}}} {L_{i_{r}}^{s_{r}}\cdots L_{i_{1}}^{s_{1}}} \\
&=& {\rm RHS}
\end{eqnarray*}
\end{proof}

\begin{remark}
To see the analogue of the identity above, we refer the reader to \cite[Prop.~6]{IKOO11}, \cite[Thm.~2.13]{SS15} and \cite[Thm.~3]{Z05}.
\end{remark}

\begin{definition} \label{analytic-continuation-CMPLs}
Let $(u_{1}, \dots, u_{r}) \in \mathcal{O}_{\CC_{v}}^{r}$ and let $a \in \FF_{q}[t]$ be nonzero such that $\log_{G}\left([a] (\bv)\right)$ converges in $\Lie G(\CC_{v})$. We define $\Li_{(s_{1}, \dots, s_{r})}(u_{1}, \dots, u_{r})_{v}$ inductively by the following identity
\begin{eqnarray*}
\Li_{(s_{1}, \ldots, s_{r})}(u_{1}, \ldots, u_{r})_{v}
&:=& (-1)^{r+1} \Li^{\star}_{(s_{r}, \ldots, s_{1})}(u_{r},\ldots,u_{1})_{v} \\
& &+ \sum_{\ell=2}^{r} (-1)^{r+\ell} \Li_{(s_{1}, \ldots, s_{\ell-1})}(u_{1}, \ldots, u_{\ell-1})_{v} \Li^{\star}_{(s_{r}, \ldots, s_{\ell})}(u_{r}, \ldots, u_{\ell})_{v}
\end{eqnarray*}
\end{definition}

\begin{remark}\label{Rmk:analytic-continuation-CMPLs}
The methods in Proposition~\ref{proposition-log-converge} are inspired by comments of one referee for $r=1$, and we are grateful to him or her to share the ideas with us. Note that the methods presented above only enable us to extend the $v$-adic convergence domain of $\Li_{\fs}$ to $\mathcal{O}_{\CC_{v}}^{r}$.
For example, we simply consider $\fs = (1)$ and in this case $\Li_{\fs}(z) = \log_{C}(z)$, which is the Carlitz logarithm convergent $v$-adically on $|z|_{v} < 1$. We take $v = \theta$ and $z \in \CC_{v}$ for which $|z|_{v} > 1$.
Then one sees that $|C_{t^{n}-1}(z)|_{v} = |z^{q^{n}}|_{v} > 1$ and so $\log_{C}(C_{t^{n}-1}(z))$ does not converge $v$-adically for each $n \in \NN$.
\end{remark}

%%%%%%%%%%%%%%%%%%%%%%%%%%%%%%%%%%%%%%%%%%%%%%%%%%%%%%%%%%%%%%%%%%

\section{$v$-adic vanishing principle for CMPLs}
In what follows, we fix a monic prime $v$ of $A$ and an $r$-tuple $\fs=(s_{1},\ldots,s_{r})\in \NN^{r}$. We further fix $\bu=(u_{1},\ldots,u_r)\in (\ok^{\times})^{r}\cap \mathcal{O}_{\CC_{v}}^{r} $ and so
\[ \Li_{(s_{\ell}, \dots, s_{r})}(u_{\ell}, \dots, u_{r})_{v} \ \mathrm{and} \ \Li_{(s_{r}, \dots, s_{\ell})}^{\star}(u_{r}, \dots, u_{\ell})_{v} \]
are defined for $\ell=1,\ldots,r$.

\subsection{The vanishing principle}
We continue the notation and hypotheses above.
We let $G = (\GG_{a}^{d}, \rho)$ be the $t$-module defined in (\ref{E:Explicit t-moduleCMPL}).
Define $\bv=\bv_{\fs,\bu}\in G(\ok)$ to be the algebraic point given  in (\ref{E:v_s,u}). Although we have the functional identity for the logarithm of a $t$-module~\ref{E:FunEquaLog}, we can not evaluate at arbitrary points as the logarithm in question is not an entire function. The following lemma provides  the situation fitting into our need for later study on $\log_{G}$.

\begin{lemma}\label{lem_log_commute}
Let $H = (\GG_{a}^{n}, \sigma)$ be a $t$-module defined over  $\mathcal{O}_{\CC_{v}}$
and $\bx \in H(\CC_{v})$ be a point such that $|\bx|_{v} < 1$.
Then for each $a \in \FF_{q}[t]$, we have
$\log_{H}(\sigma_{a}(\bx)) = \partial \sigma_{a}(\log_{H} (\bx))$.
\end{lemma}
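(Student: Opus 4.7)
The strategy is to reduce the claim to the formal functional equation \eqref{E:FunEquaLog}, namely $\log_{H} \circ \sigma_a = \partial \sigma_a \circ \log_{H}$ as an identity of formal power series in $\tau$, and then verify that both sides can be evaluated at $\bx$ because of $v$-adic convergence.

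First I would check that $\sigma_a(\bx)$ stays inside the polydisc where $\log_H$ converges. Writing $\sigma_a = \sum_{j=0}^{N} M_j \tau^j$ with $M_j \in \Mat_n(\mathcal{O}_{\CC_v})$ (since $H$ is defined over $\mathcal{O}_{\CC_v}$), the ultrametric inequality gives
\[
|\sigma_a(\bx)|_v \;\leq\; \max_{j} |M_j|_v\, |\bx|_v^{q^j} \;\leq\; |\bx|_v \;<\; 1.
\]
The convergence analysis of \S\ref{subsection-log-convergence} for $\log_G$ relies only on (i) the $t$-module being defined over $\mathcal{O}_{\CC_v}$ and (ii) nilpotence of $\partial \sigma_t - \theta I_n$; it therefore applies verbatim to $H$, yielding bounds of the shape $|P_i|_v \leq |v|_v^{-ci}$ on the coefficients of $\log_H = \sum_i P_i \tau^i$. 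Combined with $|\bx|_v < 1$ and $|\sigma_a(\bx)|_v < 1$, these bounds ensure that both $\log_H(\bx)$ and $\log_H(\sigma_a(\bx))$ converge in $\Lie H(\CC_v) = \CC_v^n$.

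Next I would promote the formal identity to an identity at $\bx$. Expanding each side as a formal power series in a vector variable $X$ gives
\[
\log_H(\sigma_a(X)) = \sum_{i \geq 0} \sum_{j=0}^{N} P_i M_j^{(i)} X^{(i+j)}, \qquad \partial \sigma_a(\log_H(X)) = \sum_{i \geq 0} \partial \sigma_a \cdot P_i \, X^{(i)},
\]
and \eqref{E:FunEquaLog} asserts that, after collecting terms with $i+j = k$, the coefficients of $X^{(k)}$ on the two sides agree for every $k \geq 0$. Substituting $X = \bx$, the bounds on $|P_i|_v$ together with $|\bx|_v < 1$ and $|M_j|_v \leq 1$ force every term of the double sum on the left to have $v$-adic norm tending to $0$; in the ultrametric topology of $\CC_v$, the sum therefore converges absolutely and may be freely rearranged. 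Rearranging by $i+j = k$ and invoking the coefficient identity delivers $\log_H(\sigma_a(\bx)) = \partial \sigma_a(\log_H(\bx))$.

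The only genuine obstacle is the rearrangement justification, i.e., verifying the uniform decay bounds on the double sum, and this in turn reduces to the convergence estimate for $|P_i|_v$ obtained by transcribing the argument of \S\ref{subsection-log-convergence} to a general $t$-module over $\mathcal{O}_{\CC_v}$. Beyond that, the lemma is a direct consequence of the formal functional equation \eqref{E:FunEquaLog}.
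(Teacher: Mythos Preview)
Your argument is correct. Both your proof and the paper's rest on the same two inputs: the formal functional equation \eqref{E:FunEquaLog} and a growth bound $|P_i|_v \leq |v|_v^{-ci}$ on the coefficients of $\log_H$, together with the observation $|\sigma_a(\bx)|_v \leq |\bx|_v < 1$. Where they diverge is in how the formal identity is upgraded to an honest equality of values. You expand both sides as ultrametric series in $\bx^{(k)}$, verify that every term has norm tending to zero, and invoke the fact that in a complete non-archimedean field any countable family with terms tending to zero is unconditionally summable; the formal coefficient identity then finishes the job. The paper instead introduces an auxiliary variable $X$, defines $\log_{H,X}$ on $(X\CC_v[\![X]\!])^n$, checks that for $\bx \in \fm_v^n$ the element $\log_{H,X}(X\bx)$ lands in the Tate algebra $\CC_v\{X\}^n$, and then specialises $X \mapsto 1$. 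Your route is more elementary and transparent about what is actually being rearranged; the paper's Tate-algebra device packages the same estimate more conceptually and avoids writing out the double sum. One small caveat: the claim that the convergence analysis of \S\ref{subsection-log-convergence} applies ``verbatim'' to a general $H$ over $\mathcal{O}_{\CC_v}$ is slightly optimistic, since for arbitrary $\sigma_t$ the nilpotent part $N'$ need not satisfy $N'^{(i)} = N'$ and there may be several $\tau$-terms; however, the adaptation (replacing $\ad(N)$ by the nilpotent operator $X \mapsto N'X - X N'^{(k)}$ and allowing a finite sum over $P_{k-j}$) is entirely routine and still yields a bound of the required shape.
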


\begin{proof}
Let $X$ be a new variable and consider a map
$\log_{H, X} \colon (X \CC_{v}[\![X]\!])^{n} \to (X \CC_{v}[\![X]\!])^{n}$ by
\[
\log_{H, X} (\bg) := \sum_{i=0}^{\infty} Q_{i} \bg^{(i)},
\]
where $Q_{i} \in \Mat_{n}(\CC_{v})$ are the coefficient matrices of $\log_{H}$ and $\bg^{(i)}$ is obtained by raising each component of $\bg$ to the $q^{i}$th power. Then we have $\log_{H, X} (\sigma_{a} (\bg)) = \partial \sigma_{a} (\log_{H, X} (\bg))$
for each $a \in \FF_{q}[t]$ and $\bg \in (X \CC_{v}[\![X]\!])^{n}$. Let $\CC_{v} \{X\}$ be the ring consisting of all power series which converge on $|X|_{v} \leq 1$, and we call $\CC_{v} \{X\}$ the Tate algebra over $\CC_{v}$.

Let $\fm_{v}$ be the maximal ideal of $\mathcal{O}_{\CC_{v}}$.
For each $\bx \in \fm_{v}^{n}$ and $i \geq 1$, it is clear that
$\log_{H, X} (X^{i} \bx) \in \CC_{v} \{X\}^{n}$ and its value at $X = 1$ is $\log_{H} \bx$.
Since $\log_{H, X}$, $\log_{H}$ and the evaluation map $X \mapsto 1$ are additive,
we can extend these operations from $\{ X^{i} \bx \}$ to $(X \fm_{v}[X])^{n}$,
and hence we have the commutative diagram
\[
\xymatrix{
(X \fm_{v}[X])^{n} \ar[r] \ar[d]_{\log_{H, X}} & \fm_{v}^{n} \ar[d]^{\log_{H}} \\
\CC_{v} \{ X \}^{n} \ar[r] & \CC_{v}^{n} = \Lie H
}
\]
where the horizontal maps are the evaluating map $\bg(X) \mapsto \bg(1)$.

Let $\bx \in H(\CC_{v})$ be a point such that $|\bx|_{v} < 1$.
Note that $\log_{H}(\sigma_{a}(\bx))$ converges since $|\sigma_{a} (\bx)|_{v} \leq |\bx|_{v} < 1$.
From the fact that $\sigma_{a}(X \bx) \in (X \fm_{v}[X])^{n}$, we have
\begin{eqnarray*}
\log_{H}(\sigma_{a}(\bx)) = \log_{H}(\sigma_{a}(X \bx)|_{X=1})
= \log_{H, X}(\sigma_{a}(X \bx))|_{X=1} \\
= \partial \sigma_{a}(\log_{H, X}(X \bx))|_{X=1}
= \partial \sigma_{a}(\log_{H} (\bx)).
\end{eqnarray*}
\end{proof}

\begin{theorem}\label{T:VanishingCriterion}
Given $(s_{1}, \dots, s_{r}) \in \NN^{r}$, let $v \in A$ be a monic irreducible polynomial and $(u_{1}, \dots, u_{r}) \in (\ok^{\times})^{r} \cap \mathcal{O}_{\CC_{v}}^{r}$.
Then the following are equivalent.
\begin{enumerate}

\item[(i)] The following $v$-adic values are simultaneously vanishing:
\[\Li_{(s_{1},\ldots,s_{r})}(u_{1},\ldots,u_{r})_{v}=\Li_{(s_{2},\ldots,s_{r})}(u_{2},\ldots,u_{r})_{v}=\cdots=\Li_{s_{r}}(u_{r})_{v}=0      .\]

\item[(ii)] The following $v$-adic values are simultaneously vanishing:
\[\Li_{(s_{r},\ldots,s_{1})}^{\star}(u_{r},\ldots,u_{1})_{v}=\Li_{(s_{r},\ldots,s_{2})}^{\star}(u_{r},\cdots,u_{2})_{v}=\ldots=\Li_{s_{r}}^{\star}(u_{r})_{v}=0      .\]

\item[(iii)] $\bv$ is an $\FF_{q}[t]$-torsion point in $G(\ok)$.

\end{enumerate}
\end{theorem}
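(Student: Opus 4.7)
The plan is to prove (iii) $\Rightarrow$ (ii) $\Leftrightarrow$ (i) by direct computations and to reduce (ii) $\Rightarrow$ (iii) to Yu's $v$-adic transcendence theory from \cite{Yu91}. For (i) $\Leftrightarrow$ (ii), Lemma~\ref{lem-CMPL-CMSPL} applied to the tail tuple $(s_{m}, \ldots, s_{r})$ with arguments $(u_{m}, \ldots, u_{r})$ yields
\[
 \Li^{\star}_{(s_{r}, \ldots, s_{m})}(u_{r}, \ldots, u_{m})_{v} = \sum_{j=m+1}^{r} (-1)^{j-m+1} \Li_{(s_{m}, \ldots, s_{j-1})}(u_{m}, \ldots, u_{j-1})_{v}\, \Li^{\star}_{(s_{r}, \ldots, s_{j})}(u_{r}, \ldots, u_{j})_{v} + (-1)^{r-m+2}\Li_{(s_{m}, \ldots, s_{r})}(u_{m}, \ldots, u_{r})_{v},
\]
an identity that passes to the analytically continued values because Definition~\ref{analytic-continuation-CMPLs} is built on precisely this relation. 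A downward induction on $m$ from $r$ to $1$ then gives (i) $\Rightarrow$ (ii): the base case $m = r$ reduces to $\Li^{\star}_{s_{r}}(u_{r})_{v} = \Li_{s_{r}}(u_{r})_{v}$, and inductively every $\Li^{\star}$ factor in the sum vanishes by the inductive hypothesis while the tail CMPL vanishes by (i). Solving the same identity for $\Li_{(s_{m}, \ldots, s_{r})}(u_{m}, \ldots, u_{r})_{v}$ and running the analogous induction yields (ii) $\Rightarrow$ (i).

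For (iii) $\Rightarrow$ (ii), pick a nonzero $a \in \FF_{q}[t]$ with $\rho_{a}(\bv) = 0$ and, by Proposition~\ref{proposition-log-converge}, pick $b \in \FF_{q}[t]$ so that $\log_{G}([b](\bv))$ converges in $\Lie G(\CC_{v})$. Since $[b](\bv)$ has $v$-adic norm strictly less than $1$, Lemma~\ref{lem_log_commute} gives
\[
 \partial \rho_{a}(\log_{G}([b](\bv))) = \log_{G}(\rho_{a}([b](\bv))) = \log_{G}(0) = 0.
\]
Because $\partial \rho_{a} = a(\theta I_{d} + N)$ with $N$ nilpotent, its only eigenvalue is $a(\theta) \neq 0$, so $\partial \rho_{a}$ is invertible and $\log_{G}([b](\bv)) = 0$ follows. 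By Remark~\ref{remark-analytic-continuation-CMSPLs}(3), the $(d_{1} + \cdots + d_{\ell})$-th coordinate of $\log_{G}([b](\bv))$ equals $(-1)^{r-\ell} b(\theta) \Li^{\star}_{(s_{r}, \ldots, s_{\ell})}(u_{r}, \ldots, u_{\ell})_{v}$; dividing by the nonzero $b(\theta)$ yields (ii).

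The main obstacle is (ii) $\Rightarrow$ (iii), for which I would proceed by induction on $r$. The base $r = 1$ is recorded in~(\ref{E:PlogCorr2}) and is due to Yu. For the inductive step, set $\bz := \log_{G}([b](\bv))$ with $b$ chosen as above. Hypothesis (ii) together with Remark~\ref{remark-analytic-continuation-CMSPLs}(3) forces the coordinates of $\bz$ indexed $d_{1} + \cdots + d_{\ell}$ to vanish for every $\ell = 1, \ldots, r$, so $\bz$ lies in an $\ok$-linear subspace of $\Lie G$ of codimension $r$ while $\exp_{G}(\bz) = [b](\bv) \in G(\ok)$. Invoking Yu's sub-$t$-module theorem on this configuration produces a proper sub-$t$-module $H \subsetneq G$ whose Lie algebra satisfies all $r$ vanishing conditions and which contains $[b](\bv)$; the technical heart is then a case analysis of the sub-$t$-module lattice of $G$ --- governed by the explicit matrix~(\ref{E:Explicit t-moduleCMPL}) and the filtration $0 \subset G_{1} \subset \cdots \subset G_{r} = G$ --- to show, under the nondegeneracy $u_{i} \neq 0$, that the only possibility is for $[b](\bv)$ (and hence $\bv$) to be $\FF_{q}[t]$-torsion. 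A complementary route uses the exact sequence $0 \to G_{r-1} \to G \to \bC^{\otimes s_{r}} \to 0$: the vanishing $\Li^{\star}_{s_{r}}(u_{r})_{v} = 0$ combined with the $r=1$ base case yields an $a_{r}$ with $\rho_{a_{r}}(\bv) \in G_{r-1}(\ok)$, after which one would iterate on $G_{r-1}$; the subtlety here is the re-indexing $G_{r-1} \cong G_{(s_{1}, \ldots, s_{r-2}, s_{r-1}+s_{r}), (u_{1}, \ldots, u_{r-2}, \ast)}$ and aligning its CMSPLs with those of the original tuple, which is precisely where Lemma~\ref{lem-CMPL-CMSPL} re-enters.
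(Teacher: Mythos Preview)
Your arguments for (i)$\Leftrightarrow$(ii) and (iii)$\Rightarrow$(ii) are essentially those of the paper and are correct (the paper reads off the distinguished coordinates of $\partial\rho_b(\log_G([a](\bv)))$ rather than inverting $\partial\rho_a$, but this is cosmetic).

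The gap is in (ii)$\Rightarrow$(iii). Your second route --- the filtration $0 = G_0 \subset \cdots \subset G_r = G$ with successive quotients $\bC^{\otimes d_\ell}$ --- is exactly what the paper uses, but the paper does \emph{not} proceed by induction on $r$ and does \emph{not} attempt to recognise $G_{r-1}$ as some $G_{\fs', \bu'}$. That re-indexing is precisely where your plan stalls, and for good reason: the point $[v(t)^{p^{n_r}} a](\bv)$, viewed in $G_{r-1}$, is not of the form $\bv_{\fs', \bu'}$, and the CMSPLs attached to $\fs' = (s_1, \ldots, s_{r-2}, s_{r-1}+s_r)$ are not the ones appearing in (ii). So neither the inductive hypothesis nor Lemma~\ref{lem-CMPL-CMSPL} helps here.

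Instead, the paper iterates inside the fixed $G$. Having chosen $a$ with $\bv_r := [a](\bv) \in G(\fm_v)$, at each stage $\ell = r, r-1, \ldots, 1$ one already has $\bv_\ell \in G_\ell(\fm_v)$. Project via $\pi_\ell$ to $\bC^{\otimes d_\ell}$; by the block upper-triangular shape of $\log_G$ (Proposition~\ref{prop_log_coeff}), the last coordinate of $\log_{\bC^{\otimes d_\ell}}(\pi_\ell(\bv_\ell))$ equals the $(d_1 + \cdots + d_\ell)$-th coordinate of $\log_G(\bv_\ell)$, which by Lemma~\ref{lem_log_commute} and Remark~\ref{remark-analytic-continuation-CMSPLs}(3) is a nonzero $\ok$-multiple of $\Li^\star_{(s_r,\ldots,s_\ell)}(u_r,\ldots,u_\ell)_v = 0$. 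After multiplying by a large $v(t)^{p^n}$ so that the log lands in the injectivity domain of $\exp_{\bC^{\otimes d_\ell}}$, one invokes the \emph{specific} result \cite[Thm.~3.7]{Yu91} for tensor powers of Carlitz (vanishing of the last logarithm coordinate at an algebraic point forces the whole logarithm to vanish) to get $\pi_\ell([v(t)^{p^{n_\ell}}](\bv_\ell)) = 0$, i.e.\ $\bv_{\ell-1} := [v(t)^{p^{n_\ell}}](\bv_\ell) \in G_{\ell-1}$. After $r$ steps $\bv_0 = 0$ and $\bv$ is torsion. No classification of sub-$t$-modules of $G$, no appeal to the general sub-$t$-module theorem, and no re-identification of $G_{r-1}$ is required; Lemma~\ref{lem-CMPL-CMSPL} plays no role in this direction.
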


\begin{proof}
(ii)$\Rightarrow$(i)
This follows from Lemma \ref{lem-CMPL-CMSPL} and Definition \ref{analytic-continuation-CMPLs}.

(i)$\Rightarrow$(ii)
This can be done inductively by using Lemma \ref{lem-CMPL-CMSPL} and Definition \ref{analytic-continuation-CMPLs}:
\begin{eqnarray*}
& & \Li^{\star}_{s_{r}}(u_{r})_{v}  = \Li_{s_{r}}(u_{r})_{v} = 0, \\
& & \Li^{\star}_{(s_{r}, s_{r-1})}(u_{r}, u_{r-1})_{v}
= \Li_{s_{r-1}}(u_{r-1})_{v} \Li^{\star}_{s_{r}}(u_{r})_{v} - \Li_{(s_{r-1}, s_{r})}(u_{r-1}, u_{r})_{v} = 0, \\
& & \Li^{\star}_{(s_{r}, s_{r-1}, s_{r-2})}(u_{r}, u_{r-1}, u_{r-2})_{v} \\
& & = \Li_{s_{r-2}}(u_{r-2})_{v} \Li^{\star}_{(s_{r}, s_{r-1})}(u_{r}, u_{r-1})_{v}
-\Li_{(s_{r-2}, s_{r-1})}(u_{r-2}, u_{r-1})_{v} \Li^{\star}_{(s_{r})}(u_{r})_{v} \\
& & + \Li_{(s_{r-2}, s_{r-1}, s_{r})}(u_{r-2}, u_{r-1}, u_{r})_{v} = 0, \\
& & \vdots \\
& & \Li^{\star}_{(s_{r}, \ldots, s_{1})}(u_{r},\ldots,u_{1})_{v} \\
& & = \sum_{\ell=2}^{r} (-1)^{\ell} \Li_{(s_{1}, \ldots, s_{\ell-1})}(u_{1}, \ldots, u_{\ell-1})_{v} \Li^{\star}_{(s_{r}, \ldots, s_{\ell})}(u_{r}, \ldots, u_{\ell})_{v} \\
& & + (-1)^{r+1} \Li_{(s_{1}, \ldots, s_{r})}(u_{1}, \ldots, u_{r})_{v} = 0.
\end{eqnarray*}

(ii)$\Rightarrow$(iii)
We keep the notation in the proof of Proposition \ref{proposition-log-converge}.
Fix a nonzero polynomial $a \in \FF_{q}[t]$ such that $\bv_{r} := [a] (\bv) \in G(\fm_{v})$.
Since the operator $\partial [v(t)] - v I_{d_{r}}$ on $\Lie \bC^{\otimes d_{r}}$
is nilpotent,
we have $\partial [v(t)^{p^{n}}] = (\partial [v(t)])^{p^{n}} = v^{p^{n}} I_{d_{r}}$ for $p^{n} \geq d_{r}$.
Thus
\[
\partial [v(t)^{p^{n}}](\log_{\bC^{\otimes d_{r}}}(\pi_{r}(\bv_{r})))
\]
lies in the convergence domain of $\exp_{C^{\otimes d_{r}}}$ for $n \gg 0$,
which coincides with
\[
\log_{\bC^{\otimes d_{r}}}([v(t)^{p^{n}}] (\pi_{r}(\bv_{r})))
\]
by Lemma \ref{lem_log_commute}.
Note that by Theorem~\ref{eq_log_value} and Remark ~\ref{remark-analytic-continuation-CMSPLs} the last coordinate of this vector is $v^{p^{n}} a(\theta) \Li^{\star}_{s_{r}}(u_{r})_{v} = 0$.
It follows by \cite[Thm.~3.7]{Yu91} that $\log_{\bC^{\otimes d_{r}}}([v(t)^{p^{n}}] (\pi_{r}(\bv_{r})))$ has to be zero.
Since $\log_{C^{\otimes d_{r}}}$ is the formal inverse of $\exp_{C^{\otimes d_{r}}}$, we have
\[
\pi_{r}([v(t)^{p^{n}}] (\bv_{r})) = [v(t)^{p^{n}}](\pi_{r}(\bv_{r})) = 0.
\]
Thus we conclude that there exists $n_{r}\in \NN$ such that
\[
\bv_{r-1} := [v(t)^{p^{n_{r}}}](\bv_{r}) \in \Ker \pi_{r} =G_{r-1}(\ok).
\]

Repeating this argument, there exist $n_{r-1}, \dots, n_{1}$ such that
\begin{eqnarray*}
\bv_{r-2} &:=& [v(t)^{p^{n_{r-1}}}](\bv_{r-1}) \in \Ker \pi_{r-1} = G_{r-2}(\ok) \\
&\vdots& \\
\bv_{1} &:=& [v(t)^{p^{n_{2}}}](\bv_{2}) \in \Ker \pi_{2} = G_{1}(\ok) \\
\bv_{0} &:=& [v(t)^{p^{n_{1}}}](\bv_{1}) \in \Ker \pi_{1} = G_{0}(\ok).
\end{eqnarray*}
Hence $\bv$ is an $\FF_{q}[t]$-torsion point.

(iii)$\Rightarrow$(ii) Let $a$ be a nonzero polynomial in $\FF_{q}[t]$ chosen as above.
Suppose that there exists a nonzero polynomial $b \in \FF_{q}[t]$
for which $[b]([a] (\bv)) = 0$.
By Lemma \ref{lem_log_commute}, Theorem \ref{eq_log_value} and Remark \ref{remark-analytic-continuation-CMSPLs} we have
\[
0 = \log_{G} ([b]([a](\bv))) = \partial [b](\log_{G} ([a] (\bv)))
= \begin{array}{rcll}
\ldelim( {15}{4pt}[] & * & \rdelim) {15}{4pt}[] & \rdelim\}{4}{10pt}[$d_{1}$] \\
& \vdots & & \\
& * & & \\
& a(\theta) b(\theta)(-1)^{r-1} \Li_{(s_{r}, \dots, s_{1})}^{\star}(u_{r}, \dots, u_{1})_{v} & & \\
& * & & \rdelim\}{4}{10pt}[$d_{2}$] \\
& \vdots & & \\
& * & & \\
&a(\theta) b(\theta) (-1)^{r-2} \Li_{(s_{r}, \dots, s_{2})}^{\star}(u_{r}, \dots, u_{2})_{v} & & \\
& \vdots & & \vdots \\
& * & & \rdelim\}{4}{10pt}[$d_{r}$] \\
& \vdots & & \\
& * & & \\
&a(\theta) b(\theta) \Li_{s_{r}}(u_{r})_{v} & & \\[10pt]
\end{array},\] whence the desired result as $a(\theta) b(\theta)$ is nonzero.

\end{proof}

\begin{corollary}\label{cor-infinite-v}
Let $\fs=(s_{1},\ldots,s_{r})\in \NN^{r}$ and $\bu=(u_{1},\ldots,u_{r})\in (\ok^{\times})^{r} \cap \mathcal{O}_{\CC_{v}}^{r}$.
Assume that $|u_{i}|_{\infty} < |\theta|_{\infty}^{\frac{s_{i} q}{q-1}}$ for each $i$.
Then $\Li_{\fs}(\bu) / \tilde{\pi}^{s_{1} + \cdots + s_{r}} \in \ok$ if and only if
\[
\Li_{(s_{1}, \dots, s_{r})}(u_{1}, \dots, u_{r})_{v}
= \Li_{(s_{2}, \dots, s_{r})}(u_{2}, \dots, u_{r})_{v}
= \cdots
= \Li_{s_{r}}(u_{r})_{v}=0.
\]
\end{corollary}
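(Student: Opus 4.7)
The plan is to chain together Theorem~\ref{T:VanishingCriterion} (the $v$-adic side, just proved) with the $\infty$-adic machinery of \cite{CPY14} and \cite{C14} that was recalled in the introduction, using the torsion condition $\clubsuit_r$ on the point $\bv \in G(\ok)$ as the common pivot between the two worlds. Write $w := s_1 + \cdots + s_r$ and set
\[
V_\ell := \Li_{(s_\ell,\ldots,s_r)}(u_\ell,\ldots,u_r)/\tilde{\pi}^{s_\ell+\cdots+s_r}, \qquad 1 \leq \ell \leq r.
\]

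First I would invoke the two ingredients from the $\infty$-adic side. Under the standing hypothesis $|u_i|_\infty < |\theta|_\infty^{s_i q/(q-1)}$, the main result of \cite{CPY14} (recalled in the introduction) asserts that $\bv = \bv_{\fs,\bu}$ is $\FF_q[t]$-torsion in $G(\ok)$ if and only if the values $V_1, V_2, \ldots, V_r$ all lie in $k$ simultaneously. Moreover, \cite{CPY14} establishes the one-way implication that Eulerianness of the top value $\Li_\fs(\bu)$ (i.e.\ $V_1 \in k$) forces Eulerianness of all the tails $V_2, \ldots, V_r$ automatically. Finally, \cite{C14} gives the refinement that $V_1 \in k$ if and only if $V_1 \in \ok$; this handles the distinction between $\ok$ and $k$ appearing in the statement.

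Putting these together is then a short chain. For the forward direction, assume $\Li_\fs(\bu)/\tilde{\pi}^w = V_1 \in \ok$. By \cite{C14}, this upgrades to $V_1 \in k$, so $\Li_\fs(\bu)$ is Eulerian. The automatic-Eulerianness statement of \cite{CPY14} then yields $V_\ell \in k$ for every $\ell$. By the ABP-type equivalence of \cite{CPY14}, this forces $\clubsuit_r$: the point $\bv$ is $\FF_q[t]$-torsion in $G(\ok)$. Applying Theorem~\ref{T:VanishingCriterion} (implication (iii)$\Rightarrow$(i)) concludes that $\Li_{(s_1,\ldots,s_r)}(u_1,\ldots,u_r)_v = \cdots = \Li_{s_r}(u_r)_v = 0$.

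Conversely, if all the $v$-adic values $\Li_{(s_\ell,\ldots,s_r)}(u_\ell,\ldots,u_r)_v$ vanish simultaneously, then Theorem~\ref{T:VanishingCriterion} (implication (i)$\Rightarrow$(iii)) tells us that $\bv$ is $\FF_q[t]$-torsion in $G(\ok)$. Applying the $\infty$-adic equivalence of \cite{CPY14} gives $V_1, \ldots, V_r \in k \subseteq \ok$, and in particular $\Li_\fs(\bu)/\tilde{\pi}^w = V_1 \in \ok$, as required. There is no real obstacle here: all of the analytic and transcendence input has already been absorbed into Theorem~\ref{T:VanishingCriterion} on the $v$-adic side and into the cited \cite{CPY14}/\cite{C14} results on the $\infty$-adic side, and the corollary is a clean transitivity argument using $\clubsuit_r$ as the bridge.
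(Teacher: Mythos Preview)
Your proposal is correct and follows essentially the same approach as the paper: both chain the equivalence $\Li_{\fs}(\bu)/\tilde{\pi}^{w}\in \ok \Leftrightarrow \Li_{\fs}(\bu)/\tilde{\pi}^{w}\in k$ from \cite{C14}, the Eulerian--torsion equivalence from \cite{CPY14}, and Theorem~\ref{T:VanishingCriterion}, with the torsion condition $\clubsuit_r$ as the pivot. The only cosmetic difference is that the paper cites the direct biconditional $V_1\in k \Leftrightarrow \bv$ torsion (via \cite[Thm.~4.3.2, Rmk.~4.3.3]{CPY14} together with Remark~\ref{Rem:G and v} identifying the paper's $(G,\bv)$ with those of \cite{CPY14}), whereas you unpack it into the automatic-Eulerianness step followed by the all-$V_\ell$-in-$k$ criterion; you may wish to add a reference to Remark~\ref{Rem:G and v} to make the identification explicit.
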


\begin{proof}
We have that $\Li_{\fs}(\bu) / \tilde{\pi}^{s_{1} + \cdots + s_{r}} \in k$ if and only if $\Li_{\fs}(\bu) / \tilde{\pi}^{s_{1} + \cdots + s_{r}} \in \ok$ by \cite[Thm.~5.4.3]{C14}.
By \cite[Thm.~4.3.2 and Rmk.~4.3.3]{CPY14} and Remark~\ref{Rem:G and v}, $\Li_{\fs}(\bu) / \tilde{\pi}^{s_{1} + \cdots + s_{r}} \in k$
if and only if $\bv_{\fs,\bu}$ is an $\FF_{q}[t]$-torsion point in $G(\ok)$.
Hence by Theorem \ref{T:VanishingCriterion} we have the desired result.
\end{proof}

%%%%%%%%%%%%%%%%%%%%%%%%%%%%%%%%%%%%%%%%%%%%%%%%%%

\bibliographystyle{alpha}

\end{document}